\DeclareMathOperator{\dive}{div}
\DeclareMathOperator{\supp}{supp}
\DeclareMathOperator{\sign}{sign}
\newcommand*{\dx}[1]{\, d #1}
\newcommand*{\R}{\mathbb{R}}
\newcommand*{\eps}{\varepsilon}
\newcommand*{\doo}{\partial}
\newcommand*{\ol}[1]{\overline{#1}}
\newcommand*{\gradeps}{\left( |\nabla u_\eps|^2 + \eps \right)}
\newcommand{\abs}[1]{\left| #1 \right|}
\newcommand{\aabs}[1]{\left\| #1 \right\|}
\newtheorem{theorem}{Theorem}
\newtheorem{defin}[theorem]{Definition}
\newtheorem{lemma}[theorem]{Lemma}
\newtheorem{remark}[theorem]{Remark}
\numberwithin{equation}{section}
\numberwithin{theorem}{section}
\title[Calder\'on problem for the $p$-Laplacian]{Calder\'on problem for the $p$-Laplacian: First order derivative of conductivity on the boundary}
\author{Tommi Brander}
\address{Department of Mathematics and Statistics, P.O.Box 35 (MaD) FI-40014 University of Jyv\"askyl\"a, Finland}
\email{tommi.o.brander@jyu.fi}
\date{\today}
\begin{document}

\begin{abstract}
We recover the gradient of a scalar conductivity defined on a smooth bounded open set in $\mathbb{R}^d$ from the Dirichlet to Neumann map arising from the $p$-Laplace equation. For any boundary point we recover the gradient using Dirichlet data supported on an arbitrarily small neighbourhood of the boundary point. We use a Rellich-type identity in the proof. Our results are new when $p \neq 2$. In the $p = 2$ case boundary determination plays a role in several methods for recovering the conductivity in the interior.
\end{abstract}

\subjclass[2000]{
Primary
35R30, 
35J92
}

\keywords{Calder\'on's problem, inverse problems, p-laplace equation, boundary determination}

\maketitle 


\section{Introduction}
Throughout the article we assume $1 < p < \infty$, unless explicitly written otherwise.
We investigate a generalisation of the Calder\'on problem to the case of the $p$-Laplace equation:
Given a bounded open set $\Omega \subset \R^d$ and a bounded conductivity $\gamma > 0$ on $\ol\Omega$ we have the Dirichlet problem
\begin{equation} \label{eq:p-laplace}
\begin{cases}
\Delta_p^\gamma(u) = \dive \left( \gamma(x) |\nabla u|^{p-2} \nabla u \right) = 0 &\text{in $\Omega$,} \\
u = v &\text{on $\doo \Omega$}.
\end{cases}
\end{equation}
For $v \in W^{1,p}(\Omega)$ there exists a unique weak solution that satisfies $\| u \|_{W^{1,p}} \leq C \| v \|_{W^{1,p}}$, where the constant $C$ does not depend on $v$.
This is completely standard, see e.g. \cite[Proposition A.1]{Salo:Zhong:2012}.
The equation arises from the variational problem of minimising the energy $\int_\Omega \gamma |\nabla u|^p \dx{x}$.

We next define the Dirichlet to Neumann map (hereafter DN map).
We use the notation $X'$ for the dual space of continuous linear functionals on a Banach space $X$.
In the $p=2$ case the DN map gives the stationary measurements of electrical current for given voltage $v$.
\begin{defin}[Dirichlet to Neumann map]\label{definition:DN_map}
Suppose  $\Omega \subset \R^d$ is bounded open $C^1$-set and $0 < \gamma_0 < \gamma \in L^\infty(\overline{\Omega})$ for some constant~$\gamma_0$.
The weak DN map
\begin{equation*}
\Lambda_\gamma^w \colon W^{1,p}(\Omega)/W^{1,p}_0(\Omega) \to \left(W^{1,p}(\Omega)/W^{1,p}_0(\Omega)\right)'
\end{equation*}
is defined by
\begin{equation}\label{eq:weak_DN}
\left\langle \Lambda_\gamma^w(v), g \right\rangle = \int_\Omega \gamma |\nabla u|^{p-2} \nabla u \cdot \nabla \tilde g \dx{x},
\end{equation}
where $u$ solves the boundary value problem~\eqref{eq:p-laplace} with boundary values~$v$, and $\tilde g \in W^{1,p}(\Omega)$ with trace~$g$ on the boundary.
The strong DN map is defined pointwise on $\doo \Omega$ by the expression
\begin{equation}\label{eq:strong_DN}
\Lambda_\gamma^s (v)(x_0) = \gamma(x_0) |\nabla u(x_0)|^{p-2}\nabla u(x_0) \cdot \nu(x_0)
\end{equation}
when it is well-defined.
\end{defin}
We also write $\Lambda_\gamma (v)$ when the function $v$ is continuous and defined on a superset of $\doo \Omega$.
With sufficient regularity we can recover the values of the strong DN map from the weak DN map (see lemma~\ref{lemma:weak_DN_to_strong}).

Given knowledge of the DN map we determine $\nabla \gamma|_{\doo \Omega}$, which in the $p=2$ case stands for the gradient of conductivity on the boundary.
\begin{theorem}\label{thm:main}
Suppose $\Omega \subset \R^d$, $d > 1$, is a bounded open $C^{2,\beta}$ set for some $0 < \beta <1$ and the conductivities $\gamma_1$ and $\gamma_2$ are bounded from below by a positive constant and continuously differentiable with H{\"o}lder-continuous derivatives in~$\ol{\Omega}$.
If $\Lambda_{\gamma_1}^w = \Lambda_{\gamma_2}^w$, then $\nabla \gamma_1|_{\doo \Omega} = \nabla \gamma_2|_{\doo \Omega}$.
\end{theorem}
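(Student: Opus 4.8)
The plan is to separate $\nabla \gamma$ into its tangential and normal parts along $\doo\Omega$, dispose of the tangential part by zeroth order boundary determination, and capture the normal derivative through a Rellich--Pohozaev identity tested against boundary data that concentrates at a single point. Recovering the restriction $\gamma|_{\doo\Omega}$ from the DN map is the zeroth order boundary determination; assuming it available gives $\gamma_1 = \gamma_2$ on $\doo\Omega$, so the tangential derivatives of $\gamma_1$ and $\gamma_2$ agree, and it remains to prove $\doo_\nu \gamma_1(x_0) = \doo_\nu \gamma_2(x_0)$ at an arbitrary fixed $x_0 \in \doo\Omega$.

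Testing the equation $\dive(\gamma \abs{\nabla u}^{p-2}\nabla u) = 0$ with $X \cdot \nabla u$ for a vector field $X \in C^1(\ol\Omega;\R^d)$ and integrating by parts yields a Rellich-type identity
\begin{equation*}
\frac1p\int_\Omega (X\cdot\nabla\gamma)\,\abs{\nabla u}^p\dx{x}
= \int_\Omega \gamma\,\abs{\nabla u}^{p-2}\,\nabla u\cdot (DX)\nabla u\dx{x}
- \frac1p\int_\Omega \gamma\,(\dive X)\,\abs{\nabla u}^p\dx{x}
+ \frac1p\int_{\doo\Omega}\gamma\,(X\cdot\nu)\,\abs{\nabla u}^p\dx{S}
- \int_{\doo\Omega}\gamma\,\abs{\nabla u}^{p-2}\,(\doo_\nu u)\,(X\cdot\nabla u)\dx{S},
\end{equation*}
whose crucial feature is the factor $X\cdot\nabla\gamma$ on the left. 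After upgrading the hypothesis to the strong DN map (lemma~\ref{lemma:weak_DN_to_strong}) so that $\Lambda_{\gamma_1}^s(v) = \Lambda_{\gamma_2}^s(v)$ pointwise, and using $\gamma_1 = \gamma_2$ together with the known tangential gradient $\nabla_T v$ of the datum, the strict monotonicity of $t \mapsto (\abs{\nabla_T v}^2 + t^2)^{(p-2)/2}\,t$ lets me solve for $\doo_\nu u$, so the full boundary gradient $\nabla u_1 = \nabla u_2$ on $\doo\Omega$ is determined by the DN map. Consequently the two boundary integrals in the identity are determined by the DN map and coincide for $\gamma_1$ and $\gamma_2$ whenever the same datum is used.

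Next I would choose $X = \phi\,e$ with $e = \nu(x_0)$ a fixed vector and $\phi \in C_c^\infty(\R^d)$ equal to $1$ near $x_0$, so that $\dive X$ and $DX$ are supported where $\nabla\phi \neq 0$, i.e. away from $x_0$. Using a common datum $v_N$ for both conductivities, let $u_{i,N}$ be the solutions and $m_{i,N} = \int_\Omega \abs{\nabla u_{i,N}}^p\dx{x}$. The data $v_N$ should be supported in shrinking neighbourhoods of $x_0$ and oscillate at frequency $N$, so that the normalised energies $m_{i,N}^{-1}\abs{\nabla u_{i,N}}^p\dx{x}$ concentrate, as measures, at $x_0$. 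Dividing the identity by $m_{i,N}$, the two interior remainder terms are $o(1)$ because their integrands vanish near $x_0$ while the energy escapes every fixed neighbourhood of $x_0$; continuity of $\nabla\gamma_i$ gives $m_{i,N}^{-1}\int_\Omega (X\cdot\nabla\gamma_i)\abs{\nabla u_{i,N}}^p \to \doo_\nu\gamma_i(x_0)$; and the common boundary term $B_N$ (equal for both problems) satisfies $B_N/m_{i,N} \to \tfrac1p\doo_\nu\gamma_i(x_0)$. Finally the energy equality $\int_\Omega \gamma_1\abs{\nabla u_{1,N}}^p = \int_\Omega \gamma_2\abs{\nabla u_{2,N}}^p$ together with $\gamma_1 = \gamma_2$ near $x_0$ forces $m_{1,N}/m_{2,N} \to 1$, whence $\doo_\nu\gamma_1(x_0) = \doo_\nu\gamma_2(x_0)$.

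The main obstacle is the construction of $v_N$ together with the quantitative localisation: one must produce oscillatory boundary data whose $p$-harmonic extensions have non-degenerate gradient energy that genuinely concentrates at $x_0$, and one must control the remainder away from $x_0$. Because $p \neq 2$ forbids separation of variables and Fourier synthesis, I expect to build $v_N$ by rescaling a fixed profile and to compare $u_{i,N}$ with the solution of the constant-coefficient $p$-Laplace equation in a half-space, using the $C^{2,\beta}$ regularity of $\doo\Omega$ and the H\"older continuity of $\nabla\gamma_i$ to absorb the lower order terms. Establishing the matching boundary-layer estimates and the uniform energy bounds is the technical heart of the argument.
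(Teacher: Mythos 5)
Your plan is, at its core, the paper's own argument: a Rellich identity whose left side carries $\doo_\alpha\gamma$ against the energy density $\abs{\nabla u}^p$, combined with zeroth order boundary determination of $\gamma$, recovery of $\doo_\nu u$ from the strong DN map via strict monotonicity of $t \mapsto (\abs{\nabla_T v}^2+t^2)^{(p-2)/2}t$ (lemma~\ref{lemma:doo_nu}), the weak-to-strong step (lemma~\ref{lemma:weak_DN_to_strong}), and oscillatory Dirichlet data concentrating at $x_0$. Two of your choices differ from the paper in inessential but interesting ways: the paper takes the vector field constant, $X \equiv \alpha$, so the interior $DX$ and $\dive X$ remainder terms vanish identically and no concentration estimate is needed to dispose of them; and the paper reconstructs $\doo_\alpha\gamma(x_0)$ for every direction $\alpha$ at once via the explicit constant $c_p$ of lemma~\ref{lemma:sz_main}, whereas your tangential/normal split plus the ratio argument $m_{1,N}/m_{2,N} \to 1$ is a uniqueness-only variant that avoids knowing $c_p$. (One small slip: boundary determination gives $\gamma_1=\gamma_2$ only \emph{on} $\doo\Omega$, not ``near $x_0$''; but your ratio step only needs $\gamma_1(x_0)=\gamma_2(x_0)$ together with concentration and continuity, so this is repairable.)

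There are, however, two genuine gaps. First, your derivation of the Rellich identity by testing the equation with $X\cdot\nabla u$ and integrating by parts requires second derivatives of $u$, and $p$-harmonic functions are in general \emph{not} in $C^2$ (the paper cites Iwaniec--Manfredi for this); as written the step fails for $p \neq 2$. The paper circumvents this by proving the identity for the $\eps$-perturbed equation~\eqref{eq:eps-laplace}, whose solutions are $C^{2,\beta_4}(\ol\Omega)$ (lemma~\ref{lemma:eps_regularity}), and passing $\eps \to 0$ using the uniform $C^{1,\beta_2}$ bounds and the convergence $u_\eps \to u$ in $C^1(\ol\Omega)$ (lemma~\ref{lemma:perturbed_convergence}); the same regularisation, with a uniform-in-$\eps$ equicontinuity argument, underlies lemma~\ref{lemma:weak_DN_to_strong}, which you invoke but which itself depends on this machinery. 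Second, what you flag as ``the technical heart'' --- boundary data whose $p$-harmonic extensions have normalised gradient energy concentrating at $x_0$ --- is precisely Wolff's oscillating $p$-harmonic functions $h(x)=e^{-x_d}a(x_1)$ as deployed by Salo and Zhong, restated here as lemma~\ref{lemma:sz_main}: it yields $M^{d-1}N^{1-p}\int_\Omega g\abs{\nabla u_M}^p\dx{x} \to c_p g(x_0)$ for any continuous $g$, from which all your limit claims follow (take $g = X\cdot\nabla\gamma_i$ for the main term, $g$ vanishing at $x_0$ for the remainders, and $g=\gamma_i$ for $m_{1,N}/m_{2,N}\to 1$). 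You correctly anticipate that $p\neq2$ forbids Fourier synthesis and that a rescaled fixed profile is needed, but constructing such solutions from scratch is a substantial piece of work (Wolff's construction), so as a self-contained proof your attempt is incomplete at exactly this point, although the gap is fillable by citation.
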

We use an explicit sequence of boundary values to reconstruct $\nabla \gamma (x_0)$ at a boundary point $x_0$.
The sequence is supported in arbitrarily small neighbourhood of $x_0$.
The boundary values we use were first introduced by Wolff~\cite{Wolff:2007} and used by Salo and Zhong~\cite{Salo:Zhong:2012} to recover the conductivity on the boundary.

Salo and Zhong~\cite{Salo:Zhong:2012} show that for all boundary points $x_0 \in \doo \Omega$ there is a sequence of solutions $(u_M)_M$ such that
\begin{equation}
M^{n-1}N^{1-p}\int_\Omega g(x) |\nabla u_M|^p \dx{x} \to c_p g(x_0)
\end{equation}
as $M, N(M) \to \infty$, with explicitly computable constant $c_p$ (see equation~\eqref{eq:c_p}).
They recover the integrals with $g = \gamma$ from the DN map.
We recover the integrals with $g = \alpha \cdot \nabla \gamma$ from a Rellich-type identity, theorem~\ref{thm:rellich}, for arbitrary direction~$\alpha \in \R^d$.

The Calder\'on problem was first introduced in~\cite{Calderon:1980}. For a review see~\cite{Uhlmann:2009}.

There have been several boundary determination results in the $p = 2$ case. Boundary uniqueness for derivatives of the conductivity was first proven by Kohn and Vogelius~\cite{Kohn:Vogelius:1984}.
Sylvester and Uhlmann in ~\cite{Sylvester:Uhlmann:1988} recovered, for smooth conductivity in smooth domain, conductivity and all its derivatives on the boundary by considering $\Lambda_\gamma$ as a pseudodifferential operator.
Nachman~\cite{Nachman:1996} recovered $\gamma|_{\doo \Omega}$ in Lipschitz domain when $\gamma \in W^{1,q}$, $q > d$, and its first derivative when $\gamma \in W^{2,q}$, $q > d/2$.
Alessandrini~\cite{Alessandrini:1990} used singular solutions with singularity near boundary to recover all derivatives of $\gamma|_{\doo \Omega}$ with less regularity assumptions on $\gamma$.
There have been several local boundary determination results, e.g.~\cite{Nakamura:Tanuma:2003,Kang:Yun:2002}.
Other reasonably recent boundary determination results include~\cite{Alessandrini:Gaburro:2009,Brown:2001}.

A Rellich identity was used by Brown, Garcia and Zhang~\cite[appendix]{Garcia:Zhang:2012} to recover the gradient of conductivity on the boundary in the $p=2$ case.
The Rellich identity was, to the best of our knowledge, introduced in~\cite{Rellich:1940}.

Several results (e.g.~\cite{Alessandrini:1990}) rely on investigating the difference $\Lambda_{\gamma_1} - \Lambda_{\gamma_2}$ of DN~maps at different conductivities.
This is difficult in the present setting due to non-linearity of the $p$-Laplace equation.
We use Rellich identity, theorem~\ref{thm:rellich}, to avoid this problem.

Electrical impedance tomography has applications in medical (e.g.~\cite{Barber:Brown:Seagar:1985}) and industrial imaging (e.g.~\cite{Karhunen:2013}), and geophysics and environmental sciences; see e.g.\ the review~\cite{Borcea:2002} and references therein.
There are practical numerical algorithms for boundary determination in the $p=2$ case, e.g.~\cite{Nakamura:Siltanen:Tamura:Wang:2005}.
Boundary determination is used in recovering the conductivity in the interior, e.g.~\cite{Haberman:Tataru:2013}.
For example, the algorithm in~\cite{Siltanen:Tamminen:2011} uses the values of conductivity and its first derivative on the boundary to extend the conductivity, thence applying to conductivities that are not constant on the boundary.

For more on the $p$-Laplace equation see e.g.~\cite{Lindqvist:2006,Heinonen:Kilpelainen:Martio:1993,D'Onofrio:Iwaniec:2005}.
The equation has applications in e.g.\ image processing~\cite{Kuijper:2007}, fluid mechanics~\cite{Aronsson:Janfalk:1992}, plastic moulding~\cite{Aronsson:1996}, and modelling of sand--piles~\cite{Aronsson:Evans:Wu:1996}.

The Calder\'on problem for the $p$-Laplace equation was first introduced in~\cite{Salo:Zhong:2012}.
The authors consider the real and the complex case separately.
In the complex case they define $p$-harmonic versions of complex geometrical optics solutions to create highly oscillating functions focused around a given boundary point and use them as Dirichlet data, thus recovering conductivity at the boundary point in question.
In the real case they replace the $p$-harmonic CGO solutions with real-valued functions having similar behaviour.
The real-valued functions were originally introduced by Wolff~\cite{Wolff:2007}.

Further progress on the $p$-Calder\'on problem was made by the author, Kar and Salo~\cite{Brander:Kar:Salo:2014}.
In the article we show that we can detect the convex hulls of inclusions, which are regions of significantly higher or lower conductivity, from the DN map.
Our main tools are the $p$-harmonic functions of Wolff and a monotonicity inequality.

Hauer~\cite{Hauer:2014} has investigated the DN map related to the $p$-Laplace equation.

One method of investigating the Calder\'on-type inverse problems for non-linear equations is based on studying the G\^ateaux derivatives of the map $\Lambda_\gamma$ at constant boundary values $a$.
In our case this does not work~\cite[appendix]{Salo:Zhong:2012}:
\begin{equation}
\Lambda_\gamma (a+tf) = t^{p-1}\Lambda_\gamma(f)
\end{equation}
for positive $t$.
For $p < 2$ the G\^ateaux derivates do not exist and for $p > 2$ the higher derivates fail to exist, or vanish, though one of them might equal $\Lambda_\gamma (f)$.
Hence nonlinear methods are necessary.

Other inverse problems related to nonlinear equations similar to the $p$-Laplace equation have been investigated before; the $1$-Laplace equation is used in current density imaging, e.g.~\cite{Kim:Kwon:Seo:Yoon:2002,Nachman:Tamasan:Timonov:2009,Hoell:Moradifan:Nachman:2013,Henkelman:Joy:Scott:1989,Armstrong:Henkelman:Joy:Scott:1991}, and the $0$-Laplace equation is related to ultrasound modulated electrical impedance tomography, e.g.~\cite{Bal:2012,Ammari:Bonnetier:Capdebocsg:Fink:Tanter:2008,Bal:Schotland:2010,Gebauer:Scherzer:2008}.

In section~\ref{sec:prelim} we introduce our notation and state lemmata.
In section~\ref{sec:salo_zhong} we restate boundary determination results we use in this article.
In the final section~\ref{section:proof} we reconstruct $\nabla \gamma|_{\doo \Omega}$ from the DN~map.

\section{Preliminaries}\label{sec:prelim}
We use the following notation:
We write $\doo_\alpha = \alpha \cdot \nabla$ for vectors $\alpha$.
On the $C^1$ boundary $\doo \Omega$ of an open set $\Omega$ we decompose the gradient of a function $f$ defined in $\ol{\Omega}$ as $\nabla f(x_0) = \nabla_\nu f(x_0) + \nabla_T f(x_0)$, where $\nabla_\nu$ is the normal component of the derivative, i.e.\ the orthogonal projection of the gradient to the normal space of the boundary, and $\nabla_T$ is the tangential derivative, i.e.\ the orthogonal projection of the gradient on the tangent space of the boundary.
We have the following identity: $|\nabla_\nu f|^2 = (\doo_\nu f)^2$.
We denote the outer unit normal by $\nu$.

We write the H{\"o}lder seminorm of order $k$ as $|f|_{k,\beta}$ and the characteristic function of a set $A$ as
\begin{equation*}
\chi_A(x) =
\begin{cases}
1 \text{ when } x \in A \\
0 \text{ when } x \notin A.
\end{cases}
\end{equation*}

We need the following elementary inequality:
\begin{lemma}\label{lemma:A.3}
Suppose $0 \leq A,B \in \R$ and $2 \leq q < \infty$.
Then for some $C > 0$ we have
\begin{equation}
|A^{q-1} - B^{q-1}| \leq C \left( A + B \right)^{q-2} | A - B|
\end{equation}
\end{lemma}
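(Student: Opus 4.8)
The plan is to prove the inequality by reducing to a single-variable comparison and then invoking the mean value theorem. The statement is symmetric in $A$ and $B$, so without loss of generality I may assume $A \geq B \geq 0$; if $A = B$ both sides vanish and there is nothing to prove, so I take $A > B \geq 0$. The goal is then to bound $A^{q-1} - B^{q-1}$ from above by $C(A+B)^{q-2}(A-B)$.

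First I would apply the mean value theorem to the function $t \mapsto t^{q-1}$ on the interval $[B,A]$. Since $q \geq 2$ we have $q - 1 \geq 1$, so this function is differentiable with derivative $(q-1)t^{q-2}$, and $t^{q-2}$ is itself nondecreasing in $t$ (as $q - 2 \geq 0$). Hence there exists $\xi \in (B,A)$ with
\begin{equation*}
A^{q-1} - B^{q-1} = (q-1)\,\xi^{q-2}\,(A-B) \leq (q-1)\,A^{q-2}\,(A-B),
\end{equation*}
using $\xi < A$ and monotonicity of $t \mapsto t^{q-2}$. The remaining task is simply to absorb the factor $A^{q-2}$ into $(A+B)^{q-2}$. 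Since $A \geq B \geq 0$ we have $A + B \geq A \geq 0$, and raising to the nonnegative power $q-2$ preserves the inequality, giving $A^{q-2} \leq (A+B)^{q-2}$. Combining yields
\begin{equation*}
|A^{q-1} - B^{q-1}| = A^{q-1} - B^{q-1} \leq (q-1)\,(A+B)^{q-2}\,(A-B) = (q-1)\,(A+B)^{q-2}\,|A-B|,
\end{equation*}
so the claim holds with $C = q - 1$.

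I do not anticipate a genuine obstacle here, as the lemma is elementary. The one point requiring mild care is the edge behaviour when $B = 0$ and the exponent $q - 2$ vanishes or when $\xi$ could sit at an endpoint; the mean value theorem supplies $\xi$ strictly interior to $(B,A)$, and the monotonicity bounds $\xi^{q-2} \leq A^{q-2}$ remain valid since $q - 2 \geq 0$ guarantees $t \mapsto t^{q-2}$ is nondecreasing on $[0,\infty)$ (constant when $q = 2$). The hypothesis $q \geq 2$ is used in exactly two places: to ensure $t \mapsto t^{q-1}$ is differentiable up to the endpoint behaviour and to ensure the power $q-2$ is nonnegative so that both $A^{q-2} \leq (A+B)^{q-2}$ and the monotonicity of the derivative hold. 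An alternative to the mean value theorem, should one prefer to avoid choosing $\xi$, is to write $A^{q-1} - B^{q-1} = \int_B^A (q-1) t^{q-2}\,dt$ and bound the integrand by its value at the right endpoint, which gives the same conclusion.
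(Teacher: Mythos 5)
Your proof is correct and follows essentially the same route as the paper: the paper writes $A^{q-1}-B^{q-1}=(q-1)(A-B)\int_0^1\bigl(A+t(B-A)\bigr)^{q-2}\,dt$ and bounds the integrand by $(A+B)^{q-2}$, which is precisely the integral variant you mention at the end, while your mean value theorem argument is the same idea with the average replaced by a single point $\xi$. Both yield the constant $C=q-1$, and your handling of the edge cases ($A=B$, $B=0$, $q=2$) is sound.
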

\begin{proof}
By fundamental theorem of calculus
\begin{equation*}
|A^{q-1} - B^{q-1}| = (q-1)\abs{A-B}\abs{\int_0^1 \left(A + t(B-A)\right)^{q-2} \dx{t}},
\end{equation*}
from which the estimate follows by observing that $ 0 \leq A+t(B-A) \leq A+B$.
\end{proof}

It is well-known that the solutions of the $p$-Laplace equation are in $C^{1,\beta}$. For proof of the following lemma see e.g.~\cite{Lieberman:1988}.
\begin{lemma}[Regularity result] \label{lemma:regularity}
Suppose $\Omega$ is a bounded open $C^{1,\beta_1}$ set with $0 < \beta_1 \leq 1$,
and suppose the conductivity $0 < \gamma \in C^{0,\beta_2}(\ol{\Omega})$ is bounded from above and away from zero.
Consider the weighted $p$-Laplace equation~\eqref{eq:p-laplace} with boundary values $v \in C^{1,\beta_1}(\doo \Omega)$.
Then the solution~$u$ of the weighted $p$-Laplace equation~\eqref{eq:p-laplace} is in $C^{1,\beta_3}(\ol{\Omega})$ for some $\beta_3 > 0$.
\end{lemma}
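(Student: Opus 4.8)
The plan is to reduce the statement to the general boundary regularity theory for quasilinear degenerate elliptic equations, which is exactly the setting treated by Lieberman~\cite{Lieberman:1988}. Writing the equation in divergence form as $\dive A(x, \nabla u) = 0$ with $A(x,\xi) = \gamma(x) |\xi|^{p-2} \xi$, the first step is to verify that $A$ satisfies the standard structure conditions: the ellipticity estimate $\doo_\xi A(x,\xi) \eta \cdot \eta \geq \lambda |\xi|^{p-2} |\eta|^2$, the growth bounds $|A(x,\xi)| \leq \Lambda |\xi|^{p-1}$ and $|\doo_\xi A(x,\xi)| \leq \Lambda |\xi|^{p-2}$, and the H\"older dependence on the spatial variable $|A(x,\xi) - A(y,\xi)| \leq \Lambda |x-y|^{\beta_2} |\xi|^{p-1}$. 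The first three follow from an explicit computation of $\doo_\xi A$ for the $p$-Laplacian, with $\lambda$ and $\Lambda$ controlled by the upper and lower bounds on $\gamma$ together with a factor $\min(1,p-1)$ coming from the degenerate/singular term; the last is immediate from $\gamma \in C^{0,\beta_2}(\ol{\Omega})$.

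Granting these structure conditions, the interior estimate $u \in C^{1,\alpha}_{\mathrm{loc}}(\Omega)$ is classical, so the genuine content of the lemma lies in the behaviour up to $\doo \Omega$. The plan for the boundary estimate is to flatten the boundary locally: near each $x_0 \in \doo \Omega$ choose a $C^{1,\beta_1}$ diffeomorphism straightening $\doo \Omega$ to a piece of hyperplane. Under such a change of variables the transformed equation is again of the same quasilinear type, with new coefficients inheriting H\"older continuity of some exponent depending on $\beta_1$ and $\beta_2$, and with the transformed Dirichlet data still in $C^{1,\beta_1}$ of the flattened boundary. One then constructs explicit barriers compatible with the degenerate ellipticity to bound $u$ and $\nabla u$ near the flattened boundary, and combines these with comparison arguments and a freezing-of-coefficients iteration to produce a uniform $C^{1,\beta_3}$ modulus for $\nabla u$ up to the boundary piece. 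A finite covering of $\doo \Omega$ then patches these local boundary estimates and the interior estimate into the global conclusion, with $\beta_3$ the minimum of the exponents produced along the way.

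The main obstacle is precisely this boundary estimate, and within it the interaction between the degeneracy of the $p$-Laplacian at points where $\nabla u = 0$ and the limited $C^{1,\beta_1}$ regularity of both the domain and the data. Where the gradient vanishes the equation loses uniform ellipticity, so the gradient H\"older estimate cannot be extracted from linear Schauder theory and one must rely on the nonlinear iteration adapted to the $p$-structure; the hypothesis $\beta_1 > 0$ is what guarantees that the flattening map and the boundary data are regular enough that the perturbation introduced by straightening the boundary is itself H\"older and does not destroy this iteration. Since all of this is carried out in full generality in~\cite{Lieberman:1988}, the only work specific to our setting is the verification of the structure conditions indicated in the first step, after which the cited theorem applies directly.
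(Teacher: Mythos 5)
Your proposal is correct and takes essentially the same approach as the paper, which disposes of this lemma by a direct citation to Lieberman~\cite{Lieberman:1988}; your verification of the structure conditions for $A(x,\xi)=\gamma(x)|\xi|^{p-2}\xi$ and your sketch of the boundary-flattening and barrier machinery are an accurate account of why that citation applies, with the H\"older-in-$x$ condition following exactly as you say from $\gamma\in C^{0,\beta_2}(\ol{\Omega})$.
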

In general the solutions are not twice continuously differentiable~\cite{Iwaniec:Manfredi:1989}.

To establish Rellich identity, theorem~\ref{thm:rellich}, we use the following interpolation lemma similar to~\cite[lemma A.1]{Liimatainen:Salo:2012}:
\begin{lemma}[Interpolation lemma] \label{lemma:interpolation}
Suppose $\Omega$ is a bounded open set in $\R^d$ satisfying the measure density condition; that is, suppose there exist $\delta_0, C > 0$ such that for all $0 < \delta < \delta_0$ and for all $y \in \ol\Omega$ we have
\begin{equation}\label{eq:mdc}
|B(y,\delta)| \leq C |B(y,\delta) \cap \ol\Omega|.
\end{equation}
Also suppose $f \in C^\beta(\ol{\Omega})$ for some $\beta > 0$.
Take $1 \leq p < \infty$.
Let $M > 0$ be a constant such that $|f|_{0,\beta} \leq M$.
Then
\begin{equation}
\| f \|_{L^\infty(\ol{\Omega})} \leq C_{d,p,\beta,\Omega}M^{\frac{d}{d+\beta p}} \|f\|_{L^p(\Omega)}^\frac{\beta p}{d+ \beta p}.
\end{equation}
\end{lemma}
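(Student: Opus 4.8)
The plan is to reduce the global estimate to a local one around a point where $|f|$ is maximal, trading the H\"older regularity of $f$ against the $L^p$ mass that $f$ is forced to carry on a small ball about such a point. The measure density condition~\eqref{eq:mdc} is precisely what guarantees that this ball meets $\ol\Omega$ in a set of definite volume, even when the maximiser happens to lie on $\doo\Omega$; for an interior maximiser a full ball sits inside $\Omega$ and the condition is not needed.

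First I would choose $x_0 \in \ol\Omega$ with $|f(x_0)| = \aabs{f}_{L^\infty(\ol\Omega)} =: A$, which exists because $\ol\Omega$ is compact and $f$ is continuous; if $A = 0$ there is nothing to prove, so assume $A>0$. Using $|f|_{0,\beta} \le M$ we have $|f(x)| \ge A - M|x-x_0|^\beta$ for every $x \in \ol\Omega$, so that $|f| \ge A/2$ on $B(x_0,r)\cap\ol\Omega$ with $r := \left( A/(2M) \right)^{1/\beta}$. Next I would bound the $L^p$ norm from below by integrating over this set,
\[
\aabs{f}_{L^p(\Omega)}^p \ge \int_{B(x_0,r)\cap\Omega} |f|^p \dx{x} \ge \left( \frac{A}{2} \right)^p |B(x_0,r)\cap\ol\Omega|,
\]
and invoke the measure density condition to get $|B(x_0,r)\cap\ol\Omega| \ge C^{-1}|B(x_0,r)| = C^{-1}\omega_d r^d$, where $\omega_d$ is the volume of the unit ball. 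Substituting $r=(A/(2M))^{1/\beta}$ and collecting powers gives $A^{\,p+d/\beta} \le C' M^{d/\beta}\aabs{f}_{L^p(\Omega)}^p$; raising both sides to the power $\beta/(d+\beta p)$ produces exactly the claimed inequality, with the exponents $\tfrac{d}{d+\beta p}$ on $M$ and $\tfrac{\beta p}{d+\beta p}$ on $\aabs{f}_{L^p}$. Notice there is no free parameter to optimise: the radius is pinned by the balance $Mr^\beta = A/2$, and the stated exponents are forced by the homogeneities.

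The main obstacle is the constraint $r < \delta_0$ under which~\eqref{eq:mdc} is available. The computation above goes through verbatim when $(A/(2M))^{1/\beta} < \delta_0$. In the complementary regime, where $\aabs{f}_{L^\infty}$ is large relative to $M\delta_0^\beta$, I would instead work on the fixed ball $B(x_0,\delta_0/2)$: the defining inequality $A \ge 2M\delta_0^\beta$ of this case still forces $|f| \ge A/2$ there, and the same integration over $B(x_0,\delta_0/2)\cap\ol\Omega$ closes the estimate once the control on $\aabs{f}_{L^\infty}$ (equivalently, the full $C^\beta(\ol\Omega)$ bound) is used, with $\delta_0$ and $\omega_d$ absorbed into $C_{d,p,\beta,\Omega}$. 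Carrying out this case split, and checking that $|B(x_0,r)\cap\Omega|$ and $|B(x_0,r)\cap\ol\Omega|$ coincide up to the (null) boundary so that~\eqref{eq:mdc} applies, is where the routine but unavoidable bookkeeping resides.
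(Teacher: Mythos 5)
Your first case is correct and is, up to reorganisation, the same mechanism as the paper's proof: the paper applies the triangle inequality in $L^p(\ol\Omega\cap B(y,\delta))$ at an arbitrary point $y$, deduces $\|f\|_{L^\infty(\ol\Omega)} \leq C\delta^{-d/p}\|f\|_{L^p} + C\delta^\beta M$, and then optimises $\delta = \left(\|f\|_{L^p}/M\right)^{p/(d+\beta p)}$; you evaluate at a maximiser and pin the radius by the balance $Mr^\beta = A/2$, which yields the identical exponents. So far there is no real difference between the two arguments.

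The genuine gap is your complementary regime $r \geq \delta_0$, and it cannot be repaired under the stated hypothesis. There you obtain $A \leq C_{\delta_0,\Omega}\|f\|_{L^p}$, and to reach $A \leq C M^{d/(d+\beta p)}\|f\|_{L^p}^{\beta p/(d+\beta p)}$ you would additionally need $A \lesssim M$; but the case hypothesis $A \geq 2M\delta_0^\beta$ points in the opposite direction, and the lemma bounds only the \emph{seminorm} $|f|_{0,\beta}$ by $M$, not the full $C^\beta$ norm — your parenthetical appeal to ``control on $\|f\|_{L^\infty}$ (equivalently, the full $C^\beta(\ol\Omega)$ bound)'' assumes exactly what is to be proven, since $A = \|f\|_{L^\infty}$ is the quantity being estimated. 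Indeed no argument closes this case: for $f \equiv c \neq 0$ the hypothesis $|f|_{0,\beta} \leq M$ holds for every $M > 0$, and letting $M \to 0$ sends the right-hand side of the claimed inequality to $0$ while the left-hand side stays at $|c|$; the inequality is false precisely in the regime you isolated. What you have in fact uncovered is that the paper's own proof silently skips the same point: its optimised $\delta$ satisfies $\delta < \delta_0$ only when $\|f\|_{L^p} < M\delta_0^{(d+\beta p)/p}$, which is the complement of your bad case. Two honest repairs: strengthen the hypothesis to $\|f\|_{L^\infty} + |f|_{0,\beta} \leq M$, after which your second case closes by writing $A = A^{d/(d+\beta p)}A^{\beta p/(d+\beta p)} \leq M^{d/(d+\beta p)}\left(C\|f\|_{L^p}\right)^{\beta p/(d+\beta p)}$; or observe that in the lemma's only use (the proof of lemma~\ref{lemma:perturbed_convergence}) $M$ is a fixed uniform H\"older bound while $\|f\|_{L^p} \to 0$, so eventually only your first case occurs. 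One further small inaccuracy: the measure density condition~\eqref{eq:mdc} is a lower volume bound on $\ol\Omega$ and does \emph{not} imply $|\doo\Omega| = 0$, so your step replacing $|B(x_0,r)\cap\Omega|$ by $|B(x_0,r)\cap\ol\Omega|$ is unjustified for general such domains (the paper's proof has the same blemish, ending with $\|f\|_{L^p(\ol\Omega)}$ where the statement has $\|f\|_{L^p(\Omega)}$); this is harmless for the $C^{1,\beta}$ domains where the lemma is applied, but worth flagging.
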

The measure density condition~\eqref{eq:mdc} is satisfied in a wide variety of situations~\cite{Hajlasz:Koskela:Tuominen:2008}.
In particular, domains that admit a Sobolev extension theorem satisfy the condition~\cite[proposition 1]{Hajlasz:Koskela:Tuominen:2008}.
Domains of class $C^{1,\alpha}$, and more generally Lipschitz domains, certainly admit a Sobolev extension theorem~\cite[theorem 12]{Calderon:1961}, \cite[chapter VI, section 3.1, theorem 5]{Stein:1970}.
A bounded Lipschitz set has a finite number of connected components, and thus also satisfies the condition.

The proof of the interpolation lemma is almost the same as in~\cite{{Liimatainen:Salo:2012}}.
\begin{proof}
Let $y \in \ol{\Omega}$ and write $B_\delta = B(y,\delta)$.
For $\delta > 0$ we have
\begin{align*}
\|f\|_{L^p(\ol{\Omega}\cap B_\delta)} &\geq \|f(y)\|_{L^p(\ol{\Omega}\cap B_\delta)} - \|f(x) - f(y)\|_{L^p(\ol{\Omega}\cap B_\delta)} \\
&= |f(y)|\left|\ol{\Omega}\cap B_\delta\right|^{1/p} - \left( \int_{\ol{\Omega}\cap B_\delta} |f(x) - f(y)|^p \dx{x} \right)^{1/p} \\
&\geq c_{d,p,\Omega} \delta^{d/p}|f(y)| - |f|_{0,\beta}\left(\int_{\ol{\Omega}\cap B_\delta} |x-y|^{\beta p} \dx{x}\right)^{1/p} \\
&\geq c_{d,p,\Omega} \delta^{d/p}|f(y)| - C_{d,p,\beta} \delta^{\beta + d/p}M,
\end{align*}
whereby it follows that
\begin{equation}
\| f \|_{L^\infty(\ol{\Omega})} \leq C_{d,p,\Omega}\delta^{-d/p} \|f\|_{L^p(\ol{\Omega})} + C_{d,p,\beta}\delta^\beta M.
\end{equation}
Choose
\begin{equation}
\delta = \left( \frac{\|f\|_{L^p(\ol\Omega)}}{M} \right)^\frac{p}{d + \beta p}.
\end{equation}
\end{proof}

In our proofs we use the $\eps$-perturbed $p$-Laplace equation.
Let $\eps > 0$ and define $u_\eps$ as the solution of the boundary value problem
\begin{equation} \label{eq:eps-laplace}
\begin{cases}
\dive \left( \gamma(x) \left(|\nabla u_\eps|^2 + \eps\right)^\frac{p-2}{2} \nabla u_\eps \right) = 0 &\text{in $\Omega$,} \\
u_\eps = v &\text{on $\doo \Omega$}.
\end{cases}
\end{equation}
By calculus of variations there exists a unique solution to~\eqref{eq:eps-laplace} when $\Omega$ is a bounded open set and $v \in W^{1,p}(\Omega)$.
The energy corresponding to~\eqref{eq:eps-laplace} is
\begin{equation}\label{eq:eps_energy}
\int_\Omega \gamma \left(|\nabla u_\eps|^2 + \eps\right)^{p/2} \dx{x}.
\end{equation}

\begin{lemma}[Regularity result for perturbed equation]\label{lemma:eps_regularity}
Suppose $\Omega$ is a bounded open set with $C^{1,\beta_1}$ boundary.
Suppose that boundary values $v \in C^{1,\beta_1}(\doo \Omega)$ and conductivity $\gamma$ is H{\"o}lder-continuous.
Then $u_\eps \in C^{1,\beta_2}(\ol \Omega)$ with some $0 < \beta_2 < 1$ independent of $\eps$ and the norm $|u_\eps|_{1,\beta_2}$ has an upper bound independent of~$\eps$.

Furthermore, suppose that $\Omega$ has $C^{2,\beta_3}$ boundary, $v \in C^{2,\beta_3}(\doo \Omega)$ and that $\nabla \gamma$ is H{\"o}lder-continous.
Then $ u_\eps \in C^{2,\beta_4}(\ol{\Omega})$ for some $0 < \beta_4 < 1$, which depends on~$\eps$.
\end{lemma}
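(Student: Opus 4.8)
The plan is to recognise equation~\eqref{eq:eps-laplace} as a member of the class of quasilinear elliptic equations treated by Lieberman and then to check that the structure conditions hold \emph{uniformly} in $\eps$; this uniformity is exactly what makes the exponent $\beta_2$ and the bound on $|u_\eps|_{1,\beta_2}$ independent of $\eps$. I would write the equation as $\dive A_\eps(x,\nabla u_\eps) = 0$ with
\begin{equation*}
A_\eps(x,\xi) = \gamma(x)\left(|\xi|^2 + \eps\right)^{\frac{p-2}{2}}\xi,
\end{equation*}
which fits the framework with structure function $g_\eps(t) = \left(t^2 + \eps\right)^{(p-2)/2} t$.

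First I would verify the key structural inequality. A direct computation gives
\begin{equation*}
\frac{t\, g_\eps'(t)}{g_\eps(t)} = 1 + \frac{(p-2)t^2}{t^2 + \eps},
\end{equation*}
and since $t^2/(t^2+\eps) \in [0,1)$ the right-hand side lies in the fixed interval with endpoints $\min(1,p-1)$ and $\max(1,p-1)$ for \emph{every} $\eps > 0$. Hence the ellipticity and growth ratios of $A_\eps$ are controlled above and below by constants depending only on $p$, not on $\eps$. The remaining conditions --- two-sided bounds on $\gamma$ and H\"older continuity of the $x$-dependence --- hold uniformly because $\gamma$ is bounded from above and away from zero and H\"older continuous; moreover the maximum principle gives $\|u_\eps\|_{L^\infty} \leq \|v\|_{L^\infty}$ uniformly. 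Feeding these $\eps$-independent structure constants into Lieberman's boundary $C^{1,\beta}$ estimate (the reference behind lemma~\ref{lemma:regularity}) yields $u_\eps \in C^{1,\beta_2}(\ol\Omega)$ with $\beta_2$ and $|u_\eps|_{1,\beta_2}$ depending only on $p$, the H\"older data and bounds of $\gamma$, $\Omega$ and $\|v\|_{C^{1,\beta_1}}$. This settles the first part.

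For the second part I would exploit that, for \emph{fixed} $\eps > 0$, the equation is genuinely (uniformly) elliptic. From the first part $|\nabla u_\eps| \leq K$ with $K$ uniform, so the coefficient obeys two-sided bounds $c(\eps) \leq (|\nabla u_\eps|^2 + \eps)^{(p-2)/2} \leq C(\eps)$, and the nonlinearity $\xi \mapsto (|\xi|^2+\eps)^{(p-2)/2}\xi$ is smooth on $\R^d$ for $\eps > 0$. Freezing the coefficients at $\nabla u_\eps$, the resulting linear operator is uniformly elliptic with H\"older-continuous coefficients, since $\nabla u_\eps \in C^{0,\beta_2}$ by the first part and $\gamma,\nabla\gamma$ are H\"older continuous. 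Standard quasilinear Schauder theory (difference quotients to reach $W^{2,2}$, then bootstrapping with frozen-coefficient Schauder estimates up to the boundary, using $\doo\Omega \in C^{2,\beta_3}$ and $v \in C^{2,\beta_3}$) upgrades $u_\eps$ to $C^{2,\beta_4}(\ol\Omega)$. Because $c(\eps),C(\eps)$ degenerate as $\eps \to 0$, the exponent $\beta_4$ and the norm depend on $\eps$, as claimed.

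The main obstacle is the uniformity in the first part: one must invoke a regularity theorem whose constants depend only on the structure ratios $t\, g_\eps'(t)/g_\eps(t)$ and the H\"older/ellipticity data, and never on the vanishing non-degeneracy $\eps$ itself. The computation above, showing these ratios stay in an $\eps$-independent interval, is precisely what guarantees this. The second part, by contrast, is routine non-degenerate Schauder theory and is permitted to lose uniformity.
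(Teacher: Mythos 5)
Your proposal is correct and follows essentially the same route as the paper, whose proof simply cites Lieberman's boundary $C^{1,\beta}$ theory for the uniform first part and standard quasilinear Schauder theory (Ladyzhenskaya--Ural'tseva, Gilbarg--Trudinger) for the $\eps$-dependent second part. Your explicit verification that $t\,g_\eps'(t)/g_\eps(t) = 1 + (p-2)t^2/(t^2+\eps)$ stays between $\min(1,p-1)$ and $\max(1,p-1)$ uniformly in $\eps$ is exactly the structure-condition check the paper leaves implicit in its citation of Lieberman.
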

\begin{proof}
That $\nabla u_\eps$ are H{\"o}lder-continuous with $\beta_2$ and the corresponding norm independent of $\eps$ follows from~\cite{Lieberman:1988}.
The $C^2$--regularity is also standard; see for example~\cite[part 4, section 8]{Ladyzhenskaya:Ural'tseva:1968} or~\cite[section 15.5]{Gilbarg:Trudinger:1983}.
\end{proof}

\begin{lemma}[Convergence of the perturbed equations]\label{lemma:perturbed_convergence}
Suppose $\Omega$ is a bounded open set with $C^{1,\beta}$ boundary, $0 < \beta < 1$, and boundary values $v$ are in $C^{1,\beta}$.
Also assume that the conductivity $\gamma$ is H\"older continuous.
Then $u_\eps \to u$ in $C^{1}(\ol \Omega)$.
\end{lemma}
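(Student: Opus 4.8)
The plan is to combine the $\eps$-independent regularity estimate of lemma~\ref{lemma:eps_regularity} with an Arzel\`a--Ascoli compactness argument and then pass to the limit in the weak formulation, using uniqueness of the limit to conclude convergence of the whole family. By the first part of lemma~\ref{lemma:eps_regularity} the solutions $u_\eps$ are bounded in $C^{1,\beta_2}(\ol\Omega)$ uniformly in $\eps$. Hence the families $(u_\eps)$ and $(\nabla u_\eps)$ are uniformly bounded and equicontinuous on the compact set $\ol\Omega$, so every sequence $\eps_k \to 0$ has a subsequence, not relabelled, along which $u_{\eps_k} \to w$ in $C^1(\ol\Omega)$ for some $w \in C^{1,\beta_2}(\ol\Omega)$. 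Since $C^1(\ol\Omega)$-convergence preserves boundary traces, $w = v$ on $\doo\Omega$.

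Next I would identify $w$ as the weak solution of the unperturbed problem~\eqref{eq:p-laplace}. The weak formulation of~\eqref{eq:eps-laplace} reads
\begin{equation*}
\int_\Omega \gamma \gradeps^{\frac{p-2}{2}}\nabla u_\eps \cdot \nabla\phi \dx{x} = 0
\end{equation*}
for every $\phi \in W^{1,p}_0(\Omega)$. Along the subsequence $\nabla u_{\eps_k} \to \nabla w$ uniformly, so the integrand converges pointwise to $\gamma|\nabla w|^{p-2}\nabla w \cdot \nabla\phi$: where $\nabla w \neq 0$ this is immediate, and where $\nabla w = 0$ both expressions vanish because $t \mapsto t^{p-1} \to 0$ as $t \to 0^+$, using $p > 1$. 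The integrand is also dominated uniformly. For $p \geq 2$ the uniform bound on $|\nabla u_\eps|$ (with, say, $\eps \leq 1$) controls $\gradeps^{\frac{p-2}{2}}|\nabla u_\eps|$ directly, while for $p < 2$ the negative exponent and monotonicity give $\gradeps^{\frac{p-2}{2}}|\nabla u_\eps| \leq |\nabla u_\eps|^{p-1}$, again bounded by the uniform $C^1$ estimate. As $\Omega$ is bounded, dominated convergence yields
\begin{equation*}
\int_\Omega \gamma|\nabla w|^{p-2}\nabla w \cdot \nabla\phi \dx{x} = 0
\end{equation*}
for all $\phi \in W^{1,p}_0(\Omega)$, so $w$ is a weak solution of~\eqref{eq:p-laplace} with boundary values $v$.

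By uniqueness of the weak solution the limit is $w = u$, regardless of the chosen subsequence. The standard principle that every sequence $\eps_k \to 0$ has a further subsequence converging in $C^1(\ol\Omega)$ to the same limit $u$ then upgrades this to convergence of the entire family, giving $u_\eps \to u$ in $C^1(\ol\Omega)$.

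The step I expect to be most delicate is the passage to the limit in the nonlinear coefficient when $p < 2$, where $\gradeps^{\frac{p-2}{2}}$ blows up as $\nabla u_\eps \to 0$ and $\eps \to 0$; the resolution is that multiplying by $\nabla u_\eps$ tames this singularity and produces the uniform domination above. Everything hinges on the $\eps$-independent $C^{1,\beta_2}$ bound of lemma~\ref{lemma:eps_regularity}, which simultaneously furnishes the compactness and the dominating function.
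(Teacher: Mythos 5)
Your proof is correct, but it takes a genuinely different route from the paper's. You extract convergence purely from compactness: the $\eps$-uniform $C^{1,\beta_2}$ estimate of lemma~\ref{lemma:eps_regularity} plus Arzel\`a--Ascoli yields subsequential $C^1(\ol\Omega)$ limits, which you identify as the unique weak solution $u$ by passing to the limit in the weak formulation --- your case split at $p<2$, where the bound $\gradeps^{\frac{p-2}{2}}\abs{\nabla u_\eps} \leq \abs{\nabla u_\eps}^{p-1}$ tames the singular coefficient, is exactly the right observation --- and the subsequence principle then upgrades subsequential convergence to convergence of the whole family. The paper instead argues through the variational structure: it first shows that the perturbed energies~\eqref{eq:eps_energy} converge to the energy of $u$ by comparing minimizers (using lemma~\ref{lemma:A.3} when $p>2$ and subadditivity of $t \mapsto t^{p/2}$ when $p\leq 2$), deduces weak $W^{1,p}$ convergence from uniqueness of the energy minimizer, upgrades to strong $L^p$ convergence of the gradients via the Radon--Riesz property, and only then reaches $C^1(\ol\Omega)$ convergence by combining the uniform H\"older bounds with the interpolation lemma~\ref{lemma:interpolation}. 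Your argument is shorter and avoids both the Radon--Riesz property and the interpolation lemma (whose only use in the paper is precisely this step), at the cost of leaning somewhat harder on the regularity theory. Two small points you should make explicit: lemma~\ref{lemma:eps_regularity} as stated bounds only the seminorm $|u_\eps|_{1,\beta_2}$, so for Arzel\`a--Ascoli you should also record the uniform $L^\infty$ bound on $u_\eps$ (by the comparison principle, which the paper itself invokes in its proof), from which a uniform bound on $\abs{\nabla u_\eps}$ follows; and to conclude that the limit $w$ attains the boundary values $v$ in the $W^{1,p}$ sense, note that $u_{\eps_k}-\tilde v \in W^{1,p}_0(\Omega)$ for an extension $\tilde v$, that $C^1(\ol\Omega)$ convergence implies $W^{1,p}(\Omega)$ convergence on the bounded set $\Omega$, and that $W^{1,p}_0(\Omega)$ is closed. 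Both approaches ultimately rest on the same $\eps$-independent estimate from~\cite{Lieberman:1988}.
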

\begin{proof}
We first show that the energy~\eqref{eq:eps_energy} of the perturbed equation~\eqref{eq:eps-laplace} converges to the energy of the non-perturbed equation~\eqref{eq:p-laplace}:
The solution $u$ minimises energy, and so we have
\begin{align*}
&\int_\Omega \gamma |\nabla u|^{p}\dx{x}\\
\leq &\int_\Omega \gamma |\nabla u_\eps|^{p}\dx{x}\\
\leq &\int_\Omega \gamma \left(|\nabla u_\eps|^2+\eps\right)^{p/2}\dx{x} \\
\leq &\int_\Omega \gamma \left(|\nabla u|^2+\eps\right)^{p/2}\dx{x} \\
\leq &\int_\Omega \gamma |\nabla u|^{p}\dx{x} + C
\begin{cases}
\int_\Omega \eps \left( 2|\nabla u|^2 + \eps \right)^{-1+p/2}\dx{x} &\text{ when } p > 2 \\
\int_\Omega \eps^{p/2} \dx{x} &\text{ when } p \leq 2,
\end{cases}
\end{align*}
where the latter integrals vanish as $\eps \to 0$ since $|\nabla u|$ is bounded (lemma~\ref{lemma:regularity}).
We used the fact that $u_\eps$ minimises the perturbed energy~\eqref{eq:eps_energy}, and also lemma~\ref{lemma:A.3} with $q = 1+p/2 > 2$ when $p>2$, and the fact that for non-negative numbers $(a+b)^q \leq a^q + b^q$ when $0 \leq q = p/2 \leq 1$, in the final inequality.

We established that the energy of $u_\eps$ converges to the energy of $u$ as $\eps \to 0$.
Since $\aabs{u_\eps}_{L^\infty}$ are uniformly bounded, it follows that $u_\eps$ are bounded in $W^{1,p}$ with weight.
This implies that the sequence has a weakly converging subsequence with limit that has lesser or equal energy than $u$.
But $u$ uniquely minimises energy, so $u_\eps \to u$ weakly in weighted $W^{1,p}$.
Since the weighted $L^p$ norms of $\nabla u_\eps$ converge to weighted $L^p$ norm of $\nabla u$, and the sequence converges weakly, we get that $\nabla u_\eps \to \nabla u$ strongly in weighted $L^{p}$ space; this is the Radon-Riesz property proven in~\cite{Radon:1913,Riesz:1928a,Riesz:1928b}.
Since the weighted and standard norms are equivalent, $\nabla u_\eps \to \nabla u$ strongly in standard $L^p$.

Since $\nabla u_\eps$ are uniformly H{\"o}lder in $\ol{\Omega}$ (lemma~\ref{lemma:eps_regularity}) we can use the interpolation lemma~\ref{lemma:interpolation} to deduce convergence of $\nabla u_\eps$ to $\nabla u$ in $C(\ol{\Omega})$.
The same argument holds for $u_\eps$, since by Friedrichs-Poincar\'e inequality $u_\eps \to u$ strongly in $L^p$.
\end{proof}

\section{Previous results on boundary determination for $p$-Laplacian}\label{sec:salo_zhong}
We use several results in the paper~\cite{Salo:Zhong:2012} and restate them here for convenience.

Suppose $\Omega \subset \R^d$ is an open bounded $C^1$ set and suppose $\rho \in C^1(\R^d)$ is its boundary defining function;
that is,
\begin{equation*}\Omega = \{ x \in \R^d ; \rho (x) > 0 \},\quad \doo \Omega = \{ x \in \R^d ; \rho (x) = 0 \}
\end{equation*}
and $\nabla \rho \neq 0$ on $\doo \Omega$.
By translation we may assume that we are trying to recover $\nabla \gamma$ at the origin $0 \in \doo \Omega$, and by rotation and scaling we may assume that
\begin{equation*}
-\nabla \rho (0) = \nu(0) = -e_d,
\end{equation*}
the $d$th coordinate vector.
We define the map $f \colon \Omega \to \R^d_+$ by
\begin{equation*}
f(x',x_d) = (x',\rho(x))
\end{equation*}
for $x = (x',x_d)$.
Since $\rho$ is a $C^1$ function, the map is close to the identity near the origin, and hence invertible in some neighbourhood of the origin.

We now introduce the $p$-harmonic oscillating functions of Wolff~\cite{Wolff:2007}.
\begin{lemma} \label{lemma:wolff}
Define the function $h \colon \R^d \to \R$ by $h(x)=e^{-x_d}a(x_1)$,
where $a \colon \R \to \R$ is the solution to the differential equation
\begin{equation}
a''(x_1) + V(a,a')a = 0
\end{equation}
with
\begin{equation}
V(a,a') = \frac{(2p-3)(a')^2 + (p-1)a^2}{(p-1)(a')^2 + a^2}.
\end{equation}
Then $\Delta_p^1(h)=0$ and the function $a$ is smooth and periodic with period $\lambda(p)$, so that $\int_0^\lambda a(x_1)\dx{x_1} = 0$.
\end{lemma}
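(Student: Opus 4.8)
The plan is to verify $\Delta_p^1(h)=0$ by a direct computation that reduces it to the stated ordinary differential equation, and then to analyse that equation in the phase plane, where an explicit conserved quantity forces every nontrivial orbit to be a closed curve around the origin. For the first part, since $h(x)=e^{-x_d}a(x_1)$ depends only on $x_1$ and $x_d$, only the first and $d$th partial derivatives are nonzero, and writing $W=(a')^2+a^2$ one has $|\nabla h|^2=e^{-2x_d}W$. Substituting into $\dive\!\left(|\nabla h|^{p-2}\nabla h\right)$ and differentiating, the exponential prefactors combine and a power of $W$ can be pulled out, giving
\[
\Delta_p^1(h)=e^{-(p-1)x_d}\,W^{(p-4)/2}\Big(\big[(p-1)(a')^2+a^2\big]a''+\big[(2p-3)(a')^2+(p-1)a^2\big]a\Big).
\]
Dividing the bracket by $(p-1)(a')^2+a^2$, which is strictly positive off the origin because $p>1$, reproduces exactly $a''+V(a,a')a$; hence $\Delta_p^1(h)=0$ is equivalent to the differential equation for $a$, and this direction is routine.

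For the qualitative properties of $a$ I would pass to the first-order system $a'=v$, $v'=-V(a,v)a$ on the $(a,v)$-plane. The field $V$ is real-analytic and homogeneous of degree $0$ on $\R^2\setminus\{0\}$, and its only equilibrium is the origin: on $\{v=0,\,a\neq0\}$ one has $V=p-1\neq0$, so the vector field never vanishes away from the origin. The key object is the first integral
\[
H(a,v)=\big(v^2+a^2\big)\exp\!\left(\frac{(p-2)a^2}{(p-1)(v^2+a^2)}\right),
\]
which I would check is constant along solutions by differentiating $\log H$ and invoking the differential equation: the combination that appears is precisely the bracket above and therefore vanishes. Since the exponent is homogeneous of degree $0$, $H$ is homogeneous of degree $2$, so on the unit circle $H$ equals a continuous positive function $m(\theta)$ bounded away from $0$ and $\infty$, and each level set $\{H=c\}$ is the simple closed curve $r=\sqrt{c/m(\theta)}$ encircling the origin. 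A nonvanishing analytic flow on a topological circle is periodic, so every nontrivial $a$ is smooth and periodic. Homogeneity also shows that if $(a,v)$ solves the system then so does $(\lambda a,\lambda v)$ with the same time parametrisation, so all nontrivial orbits share a single period $\lambda(p)$, which justifies the notation.

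Finally, the vanishing of the mean over a period I would obtain from symmetry. The equation is invariant under $a\mapsto-a$, so $-a(\cdot)$ is again a solution lying on the same orbit; since two solutions on one periodic orbit differ by a time shift, $-a(s)=a(s+\sigma)$ for some $\sigma$, and as $a\not\equiv0$ a short argument forces $\sigma=\lambda/2$. Thus $a(s+\lambda/2)=-a(s)$, and splitting $\int_0^\lambda a$ at $\lambda/2$ and substituting yields $\int_0^\lambda a=0$. The main obstacle is the behaviour of the system near the origin, where $V$ is defined only up to direction and no naive linearisation is available; the explicit conserved quantity $H$ together with its degree-$2$ homogeneity is what circumvents this, simultaneously delivering closed orbits, a common period, and the reflection symmetry needed for the zero mean.
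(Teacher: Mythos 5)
Your proposal is correct, and you should know that the paper itself contains no proof of this lemma: it defers entirely to \cite[lemma 3.1]{Salo:Zhong:2012} (the construction going back to Wolff), so your argument is necessarily a self-contained alternative. I verified your two computational pillars. The reduction of $\Delta_p^1(h)=0$ to the ODE is exact: with $W=(a')^2+a^2$ the divergence computation does produce $e^{-(p-1)x_d}W^{(p-4)/2}\bigl(\bigl[(p-1)(a')^2+a^2\bigr]a''+\bigl[(2p-3)(a')^2+(p-1)a^2\bigr]a\bigr)$, and the denominator $(p-1)(a')^2+a^2$ is positive off the origin. Your first integral also checks out: along solutions one computes $\bigl(\log(v^2+a^2)\bigr)'=\frac{2av(2-p)}{(p-1)v^2+a^2}$ and $\Bigl(\frac{(p-2)a^2}{(p-1)(v^2+a^2)}\Bigr)'=\frac{2av(p-2)}{(p-1)v^2+a^2}$, so $\log H$ is constant. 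The rest is sound: degree-$2$ homogeneity of $H$ (with $m(\theta)=e^{(p-2)\cos^2\theta/(p-1)}$ bounded between positive constants) makes each level set a simple closed curve avoiding the singular point of $V$, a nonvanishing flow on such a curve is periodic, degree-$0$ homogeneity of $V$ gives a common period under dilations, and the evenness of $H$ makes the level curve antipodally symmetric, so $-a(s)=a(s+\sigma)$; then $2\sigma\in\lambda\mathbb{Z}$ forces $\sigma=\lambda/2$ (state explicitly that $\lambda$ is the \emph{minimal} period here, and that $a$ nonconstant rules out $\sigma\equiv 0$), giving the zero mean. For comparison, the cited proof runs the phase-plane analysis in polar coordinates $a=r\cos\theta$, $a'=r\sin\theta$, where the degree-$0$ homogeneity decouples the angle: one finds $\theta'=-\frac{p-1}{(p-1)\sin^2\theta+\cos^2\theta}<0$, so orbits wind around the origin, and closedness comes from an oddness cancellation in $(\log r)'/\theta'$; your $H$ is exactly a closed form of that cancellation, since $\log H=2\log r+\frac{p-2}{p-1}\cos^2\theta$, so the two routes buy the same conclusions, with yours packaging the bookkeeping into one conserved quantity. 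One point worth making explicit in your write-up: since nontrivial orbits lie on a level curve bounded away from the origin, $W$ is bounded below on all of $\R$, hence $\nabla h$ never vanishes, the pointwise computation is legitimate everywhere, and the smooth classical solution is in particular a weak solution of $\Delta_p^1(h)=0$ --- this matters because for $1<p<2$ the operator is singular at critical points of $h$.
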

For proof of the lemma see~\cite[lemma 3.1]{Salo:Zhong:2012}.

We define a smooth positive cutoff function $\zeta \in C^\infty_0(\R^d)$, such that $\zeta (x) = 1$ when $|x| < 1/2$ and $\supp \zeta \subset B(0,1)$.
We write
\begin{equation}\label{eq:v_M}
v_M(x) = h(Nf(x))\zeta(Mx),
\end{equation}
where $M$ and $N = N(M)$ are large positive numbers, $M = o(N)$.
We define $u_M$ to be the solutions of the initial value problem
\begin{equation}
\begin{cases}
\Delta_p^\gamma(u_M) = 0 &\text{in $\Omega$,} \\
u_M = v_M &\text{on $\doo \Omega$}.
\end{cases}
\end{equation}

In \cite[lemma 3.3]{Salo:Zhong:2012} Salo and Zhong show that
\begin{equation}
M^{d-1}N^{1-p}\int_\Omega \gamma |\nabla v_M|^p \dx{x} \to c_p \gamma(0)
\end{equation}
as $M \to \infty$.
The constant $c_p$ is explicit:
\begin{equation}\label{eq:c_p}
c_p = \frac{K}{p}\int_{\R^{d-1}}(\eta(x',0))^p\dx{x'}; \text{ with } K = \lambda^{-1} \int_0^\lambda \left(a^2(t) + (a'(t))^2\right)^{p/2}\dx{t}.
\end{equation}
The proof also holds when $\gamma$ is replaced by any other function continuous at~$0$, so as $M \to \infty$ we have
\begin{equation}
M^{d-1}N^{1-p}\int_\Omega \doo_\alpha \gamma |\nabla v_M|^p \dx{x} \to c_p \doo_\alpha \gamma(0).
\end{equation}
Using~\cite[lemma 3.4]{Salo:Zhong:2012} we get the following result:
\begin{lemma}\label{lemma:sz_main}
Suppose $\Omega \subset \R^d$ is a bounded open set with $C^1$ boundary, $x_0 \in \doo \Omega$, $d \geq 2$, and $g$ is continuous.
Then
\begin{equation}
M^{d-1}N^{1-p}\int_\Omega g(x) |\nabla u_M|^p \dx{x} \to c_p g(x_0)
\end{equation}
as $M \to \infty$.
\end{lemma}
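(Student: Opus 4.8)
The plan is to transfer the known convergence for the boundary data $v_M$ to the solutions $u_M$ by controlling the difference of the two energies after the scaling $M^{d-1}N^{1-p}$. Since \cite[lemma 3.3]{Salo:Zhong:2012}, together with the remark (quoted above) that its proof survives replacing $\gamma$ by any function continuous at $x_0$, already yields
\begin{equation*}
M^{d-1}N^{1-p}\int_\Omega g(x)\abs{\nabla v_M}^p\dx{x}\to c_p g(x_0),
\end{equation*}
it suffices to prove that
\begin{equation*}
M^{d-1}N^{1-p}\int_\Omega g(x)\left(\abs{\nabla u_M}^p - \abs{\nabla v_M}^p\right)\dx{x}\to 0
\end{equation*}
as $M\to\infty$.

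First I would estimate the integrand pointwise. Applying lemma~\ref{lemma:A.3} with $q=p+1$ (so $q\geq 2$ because $p>1$), $A=\abs{\nabla u_M}$ and $B=\abs{\nabla v_M}$, followed by the reverse triangle inequality $\bigl|\abs{\nabla u_M}-\abs{\nabla v_M}\bigr|\leq\abs{\nabla u_M-\nabla v_M}$, gives
\begin{equation*}
\left|\abs{\nabla u_M}^p-\abs{\nabla v_M}^p\right|\leq C\left(\abs{\nabla u_M}+\abs{\nabla v_M}\right)^{p-1}\abs{\nabla u_M-\nabla v_M}.
\end{equation*}
Multiplying by $\aabs{g}_{L^\infty}$, integrating, and splitting the right-hand side by H\"older's inequality with conjugate exponents $p/(p-1)$ and $p$, I would obtain
\begin{equation*}
\int_\Omega\left|\abs{\nabla u_M}^p-\abs{\nabla v_M}^p\right|\dx{x}\leq C\left(\int_\Omega\left(\abs{\nabla u_M}+\abs{\nabla v_M}\right)^p\dx{x}\right)^{\frac{p-1}{p}}\left(\int_\Omega\abs{\nabla u_M-\nabla v_M}^p\dx{x}\right)^{\frac{1}{p}}.
\end{equation*}

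To check that the two factors scale correctly, I would use energy minimisation: as $v_M$ is an admissible competitor for the energy minimised by $u_M$, and $\gamma$ is bounded above and away from zero, one has $\int_\Omega\abs{\nabla u_M}^p\dx{x}\leq C\int_\Omega\abs{\nabla v_M}^p\dx{x}$, so that the first factor, once multiplied by $M^{d-1}N^{1-p}$, stays bounded (its size is controlled by the $g\equiv 1$ instance of the $v_M$ statement). The decisive ingredient is then the estimate of \cite[lemma 3.4]{Salo:Zhong:2012}, which shows that the scaled gradient difference satisfies
\begin{equation*}
M^{d-1}N^{1-p}\int_\Omega\abs{\nabla u_M-\nabla v_M}^p\dx{x}\to 0.
\end{equation*}
Distributing the factor $M^{d-1}N^{1-p}$ across the H\"older product, the scaled difference is dominated by a bounded quantity raised to the power $(p-1)/p$ times a vanishing quantity raised to the power $1/p$; hence it tends to zero, and the lemma follows.

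I expect the genuine obstacle to be confined entirely to the gradient-difference bound of lemma 3.4, which I would invoke as a black box rather than reprove. Establishing it requires exploiting the monotonicity of the $p$-Laplacian to convert the energy comparison into control of $\nabla(u_M-v_M)$, while simultaneously showing that $v_M$ is sufficiently close to being $p$-harmonic that the residual generated by the cutoff $\zeta(Mx)$ and the boundary-flattening map $f$ is negligible after scaling. This is delicate for $1<p<2$, where the monotonicity inequality degenerates into a weighted form and one cannot bound $\abs{\nabla u_M-\nabla v_M}^p$ directly by the difference of the nonlinear fluxes.
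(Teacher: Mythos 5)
Your proposal is correct and takes essentially the same route as the paper, which likewise derives the lemma by combining the $v_M$-limit of \cite[lemma 3.3]{Salo:Zhong:2012} (noted there to hold for any weight continuous at the boundary point) with the gradient-difference estimate of \cite[lemma 3.4]{Salo:Zhong:2012}; the paper leaves the transfer from $v_M$ to $u_M$ implicit, and your lemma~\ref{lemma:A.3}--H\"older--energy-minimisation computation is precisely the standard way to carry it out. The only caveat is the one you already flag: for $1<p<2$ the cited lemma controls $\nabla(u_M-v_M)$ in the degenerate weighted form, so one further H\"older interpolation against the bounded scaled energies is needed to reach the unweighted $L^p$ convergence you quote, a step your setup accommodates without change.
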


\section{Proof of the main result} \label{section:proof}
In this section we establish that the quantity
\begin{equation}
\int_\Omega \doo_\alpha\gamma(x) |\nabla u|^p \dx{x}
\end{equation}
can be calculated from the measurements represented by the DN map, with the function~$u$ solving the weighted $p$-Laplace equation~\eqref{eq:p-laplace} with sufficiently smooth boundary values.
Then it follows from lemma~\ref{lemma:sz_main} that we can recover $\doo_\alpha \gamma(x_0)$ at any boundary point $x_0 \in \doo \Omega$.
The main result, theorem~\ref{thm:main}, immediately follows.

\begin{theorem}[Rellich identity] \label{thm:rellich}
Suppose $1<p<\infty$ and that $\Omega \subset \R^d$ is a bounded open set with $C^{2,\beta}$ boundary for some $0 < \beta < 1$.
Let $0 < \gamma \in C^1(\overline{\Omega})$ and let $u$ be a weak solution of the equation $\Delta_p^\gamma u = 0$.
Let $\alpha \in \R^d$.

Then
\begin{equation}
\int_\Omega (\doo_\alpha \gamma) |\nabla u|^p \dx{x} = \int_{\doo \Omega} \gamma (\alpha \cdot \nu) |\nabla u|^p \dx{S} - p\int_{\doo \Omega} (\doo_\alpha u) \gamma |\nabla u|^{p-2} \doo_\nu u \dx{S}.
\end{equation}
\end{theorem}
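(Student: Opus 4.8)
The plan is to establish the identity first for the $\eps$-perturbed equation~\eqref{eq:eps-laplace}, whose solution $u_\eps$ is genuinely twice continuously differentiable by lemma~\ref{lemma:eps_regularity}, and then to let $\eps\to 0$ with the help of lemma~\ref{lemma:perturbed_convergence}. This detour is forced on us because the natural computation differentiates the energy density $|\nabla u|^p$ once more and so calls for second derivatives of $u$, which a $p$-harmonic function need not possess. Throughout I take the boundary values $v = u|_{\doo\Omega}$ to lie in $C^{2,\beta}(\doo\Omega)$ and $\nabla\gamma$ to be H\"older continuous, exactly as for the data $v_M$ and the conductivities in the application to theorem~\ref{thm:main}; these are the hypotheses under which the regularity and convergence lemmas apply, and the general $C^1$ case can be reached afterwards by approximating $\gamma$ in the $C^1$ norm.

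For fixed $\eps>0$ I would write the perturbed energy density as $\gamma\gradeps^{p/2}$. Since $\alpha$ is a constant vector we have $\doo_\alpha f = \dive(\alpha f)$, so the divergence theorem on the $C^{2,\beta}$ domain gives
\begin{equation*}
\int_\Omega \doo_\alpha\!\left(\gamma\gradeps^{p/2}\right)\dx{x} = \int_{\doo\Omega}\gamma\gradeps^{p/2}(\alpha\cdot\nu)\dx{S}.
\end{equation*}
Next I would expand the integrand on the left by the product and chain rules; using that $\doo_\alpha$ commutes with $\nabla$, so that $\doo_\alpha\nabla u_\eps = \nabla(\doo_\alpha u_\eps)$, this produces the desired volume term $\int_\Omega(\doo_\alpha\gamma)\gradeps^{p/2}\dx{x}$ together with the cross term $p\int_\Omega \gamma\gradeps^{(p-2)/2}\nabla u_\eps\cdot\nabla(\doo_\alpha u_\eps)\dx{x}$. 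I would then integrate this cross term by parts with $\doo_\alpha u_\eps$ playing the role of test function; its interior contribution carries the factor $\dive\!\left(\gamma\gradeps^{(p-2)/2}\nabla u_\eps\right)$, which vanishes identically because $u_\eps$ solves~\eqref{eq:eps-laplace} classically, leaving precisely $p\int_{\doo\Omega}(\doo_\alpha u_\eps)\gamma\gradeps^{(p-2)/2}\doo_\nu u_\eps\dx{S}$. Rearranging gives the stated identity with $u$ replaced by $u_\eps$ and $|\nabla u|^2$ by $\gradeps$. Every step here is legitimate because $u_\eps\in C^2(\ol\Omega)$ and, for $\eps>0$, the base satisfies $\gradeps\geq\eps>0$, keeping all coefficients $C^1$ up to the boundary.

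The final step is to send $\eps\to 0$. Lemma~\ref{lemma:perturbed_convergence} gives $u_\eps\to u$ in $C^1(\ol\Omega)$, hence $\nabla u_\eps\to\nabla u$ and $\gradeps\to|\nabla u|^2$ uniformly on $\ol\Omega$; the volume integral and the first boundary integral then pass to the limit at once, since $t\mapsto t^{p/2}$ is uniformly continuous on the bounded range of $\gradeps$ while the remaining factors are fixed and bounded. I expect the main obstacle to be the second boundary integral when $p<2$, where $\gradeps^{(p-2)/2}$ may blow up at points where $\nabla u$ vanishes. The remedy is the pointwise bound
\begin{equation*}
\left|\gradeps^{(p-2)/2}(\doo_\alpha u_\eps)(\doo_\nu u_\eps)\right| \leq \gradeps^{(p-2)/2}|\nabla u_\eps|^2 \leq \gradeps^{p/2},
\end{equation*}
whose right-hand side is uniformly bounded by the $\eps$-independent $C^1$ estimates of lemma~\ref{lemma:eps_regularity}. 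Where $\nabla u\neq 0$ the integrand converges to $|\nabla u|^{p-2}(\doo_\alpha u)(\doo_\nu u)$, and where $\nabla u = 0$ the same bound forces it to $0$; dominated convergence then carries this term to its limit, and collecting all the pieces yields the Rellich identity for $u$.
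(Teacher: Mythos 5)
Your proposal is correct and follows essentially the same route as the paper: prove the identity for the $\eps$-perturbed equation~\eqref{eq:eps-laplace} via integration by parts (using that $u_\eps \in C^2(\ol\Omega)$ by lemma~\ref{lemma:eps_regularity} and that the divergence term vanishes because $u_\eps$ solves the perturbed equation), then pass to the limit using the $C^1(\ol\Omega)$ convergence of lemma~\ref{lemma:perturbed_convergence}. Your extra care in the limit for $p<2$ --- the dominated-convergence bound $\gradeps^{(p-2)/2}|\doo_\alpha u_\eps||\doo_\nu u_\eps| \lesssim \gradeps^{p/2}$ near zeros of $\nabla u$ --- and your explicit statement of the regularity hypotheses ($v \in C^{2,\beta}$, $\nabla\gamma$ H\"older) are points the paper treats only implicitly, but they do not change the method.
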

\begin{proof}
Suppose we have the following identity for the solution~$u_\eps$ of the perturbed $p$-Laplace equation~\eqref{eq:eps-laplace}:
\begin{align*}
&\int_\Omega (\doo_\alpha \gamma) \gradeps^{p/2} \dx{x} \\
&= \int_{\doo \Omega} \gamma (\alpha \cdot \nu) \gradeps^{p/2} \dx{S} - p\int_{\doo \Omega} \doo_\alpha u_\eps \gamma \gradeps^\frac{p-2}{2} \doo_\nu u_\eps \dx{S}.
\end{align*}
Then, when $\eps \to 0$, $\nabla u_\eps \to \nabla u$ uniformly (by lemma~\ref{lemma:perturbed_convergence}), so we get the claimed identity.

We now prove the perturbed identity, integrating by parts twice:
\begin{align*}
\int_{\doo \Omega} &(\alpha \cdot \nu) \gamma \gradeps^{p/2} \dx{S} - \int_\Omega (\doo_\alpha \gamma) \left(|\nabla u_\eps|^2 +\eps\right)^{p/2} \dx{x} \\
= & \int_\Omega \gamma \doo_\alpha \left( \gradeps^{p/2} \right) \dx{x} \\
= & p\int_\Omega \gamma \gradeps^\frac{p-2}{2} \nabla u_\eps \cdot \nabla \doo_\alpha u_\eps \dx{x} \\
= & p \int_{\doo \Omega} \doo_\alpha u_\eps \gamma \gradeps^\frac{p-2}{2} \doo_\nu u_\eps \dx{S} \\
&- p\int_\Omega \dive \left( \gamma \gradeps^\frac{p-2}{2} \nabla u_\eps \right) \doo_\alpha u_\eps \dx{x} \\
= &p\int_{\doo \Omega} \doo_\alpha u_\eps \gamma \gradeps^\frac{p-2}{2} \doo_\nu u_\eps \dx{S}.\qedhere
\end{align*}
\end{proof}

We need to recover $\gamma|_{\doo \Omega}$ and $\nabla u|_{\doo \Omega}$ to recover $\int_\Omega \doo_\alpha\gamma |\nabla u|^p \dx{x}$ from the DN~map.

Supposing $\Omega$ is an open set with $C^1$ boundary and $\gamma$ is continuous, we can recover $\gamma(x_0)$ for all boundary points $x_0$.
This follows directly from \cite{Salo:Zhong:2012}.

To calculate $\doo_\nu u$ we use $C^{1,\beta}$ boundary values $v$ so that the solution~$u$ of the boundary value problem~\eqref{eq:p-laplace} is continuously differentiable up to the boundary (by lemma~\ref{lemma:regularity}), so $\doo_\nu u$ is defined pointwise.
Recovering $\doo_\nu u$ from the strong definition of the DN map is straightforward, and we do so in lemma~\ref{lemma:doo_nu}.
However, we do not a~priori have access to the strong DN map, and must recover it from the weak definition~\ref{definition:DN_map}.
This we do in lemma~\ref{lemma:weak_DN_to_strong}, which requires more smoothness on $\Omega$ and boundary values.

We have
\begin{equation}
\Lambda_\gamma^s (v) = \gamma |\nabla u|^{p-2}\doo_\nu u,
\end{equation}
from which by expanding $\nabla u$ we get the equation
\begin{equation*}
\gamma |\nabla_T u + \nabla_\nu u|^{p-2}\doo_\nu u = \Lambda_\gamma^s (v).
\end{equation*}
Taking absolute values and reorganising we have
\begin{equation*}
(|\nabla_T u|^2 + |\nabla_\nu u|^2)^{\frac{p-2}{2}}|\doo_\nu u| = \frac{1}{\gamma}|\Lambda_\gamma^s (v)|.
\end{equation*}
We write the left hand side as a function of $|\doo_\nu u| = |\nabla_\nu u|= t$:
\begin{equation}
F(t) = (|\nabla_T u|^2 + t^2)^{\frac{p-2}{2}}t = \frac{1}{\gamma}|\Lambda_\gamma^s (v)|.
\end{equation}

There is a unique $t_0 \in [0,\infty[$ with $F(t_0) = \frac{1}{\gamma}|\Lambda_\gamma^s (v)|$.
To see this, observe that $\lim_{t \to 0} F(t) = 0$, $\lim_{t \to \infty} F(t) = \infty$, and that the function $F$ is strictly increasing and continuous.

Returning to the original formulation and expanding $|\nabla u|^2$ we therefore get
\begin{equation}
\gamma (|\nabla_T u|^2 + t_0^2)^\frac{p-2}{2} \doo_\nu u = \Lambda_\gamma^s (v),
\end{equation}
from which we can solve $\doo_\nu u$. We have proven the following lemma:
\begin{lemma}\label{lemma:doo_nu}
Suppose $\Omega$ is a bounded open set with $C^{1,\beta}$ boundary.
Let the boundary voltage $v \in C^{1,\beta}$ be known.
Then, for any boundary point $x_0 \in \doo \Omega$, we can compute $\doo_\nu u (x_0)$ from the strong DN map with the following algorithm:
\begin{enumerate}
\item Solve $t$ from the equation \[ (|\nabla_T u (x_0)|^2 + t^2)^{\frac{p-2}{2}}t = \frac{1}{\gamma(x_0)}|\Lambda_\gamma^s (v)(x_0)|. \]
\item Set $\doo_\nu u (x_0) = t\sign(\Lambda_\gamma^s (v)(x_0))$.
\end{enumerate}
\end{lemma}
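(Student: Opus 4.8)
The plan is to invert, pointwise on $\doo\Omega$, the relation between the strong DN map and the normal derivative. I would start from the defining identity $\Lambda_\gamma^s(v)(x_0) = \gamma(x_0)|\nabla u(x_0)|^{p-2}\doo_\nu u(x_0)$ and use the orthogonal splitting $\nabla u = \nabla_T u + \nabla_\nu u$, which gives $|\nabla u|^2 = |\nabla_T u|^2 + |\nabla_\nu u|^2$ and, via the identity $|\nabla_\nu u| = |\doo_\nu u|$ recorded in section~\ref{sec:prelim}, reduces the problem to the single unknown scalar $t = |\doo_\nu u(x_0)|$. Here $|\nabla_T u(x_0)|$ is a \emph{known} quantity: since $u = v$ on $\doo\Omega$ with $v \in C^{1,\beta}$, the tangential derivative of $u$ on the boundary coincides with that of the prescribed datum, while the regularity lemma~\ref{lemma:regularity} guarantees $\doo_\nu u$ is defined pointwise. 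Taking absolute values and dividing by $\gamma(x_0) > 0$ then yields the scalar equation $F(t) = \gamma(x_0)^{-1}|\Lambda_\gamma^s(v)(x_0)|$ with $F(t) = (|\nabla_T u(x_0)|^2 + t^2)^{(p-2)/2}\, t$, which is exactly step (1) of the algorithm.

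The heart of the argument is to show that $F \colon [0,\infty) \to [0,\infty)$ is a continuous, strictly increasing bijection, so that $t$ is uniquely pinned down by the measured right-hand side. Writing $s = |\nabla_T u(x_0)|$, I would differentiate to obtain $F'(t) = (s^2 + t^2)^{(p-4)/2}\bigl((p-1)t^2 + s^2\bigr)$, which is strictly positive for $s^2 + t^2 > 0$ precisely because $p > 1$. Continuity at $t = 0$ together with $F(0) = 0$ and $F(t) \to \infty$ as $t \to \infty$ then give surjectivity and, with strict monotonicity, injectivity onto $[0,\infty)$. Hence the equation in step (1) has a unique solution $t_0$, and $t_0 = |\doo_\nu u(x_0)|$.

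Finally I would recover the sign. Since $\gamma(x_0) > 0$ and the factor $(|\nabla_T u(x_0)|^2 + t_0^2)^{(p-2)/2}$ is strictly positive whenever $\nabla u(x_0) \neq 0$, the signed identity forces $\sign(\doo_\nu u(x_0)) = \sign(\Lambda_\gamma^s(v)(x_0))$; in the degenerate case $\nabla u(x_0) = 0$ both sides vanish and $\doo_\nu u(x_0) = 0$ is read off directly. Combining the magnitude $t_0$ with this sign produces step (2), namely $\doo_\nu u(x_0) = t_0\,\sign(\Lambda_\gamma^s(v)(x_0))$, which completes the proof.

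I expect the only genuinely delicate point to be the monotonicity of $F$ in the singular regime $1 < p < 2$, where the exponent $(p-2)/2$ is negative and the prefactor $(s^2+t^2)^{(p-2)/2}$ is itself decreasing in $t$, so one must verify that the linear factor $t$ dominates; the degenerate cases $s = 0$ (where $F(t) = t^{p-1}$) and the joint degeneracy $s = t = 0$ also need separate attention. The derivative computation above settles all of these uniformly under the standing assumption $p > 1$, so the obstacle is one of careful bookkeeping rather than a substantive difficulty.
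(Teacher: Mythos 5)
Your proposal is correct and follows essentially the same route as the paper: the same orthogonal decomposition $\nabla u = \nabla_T u + \nabla_\nu u$, the same reduction to the scalar equation $F(t) = \gamma(x_0)^{-1}|\Lambda_\gamma^s(v)(x_0)|$ with $F(t) = (|\nabla_T u(x_0)|^2 + t^2)^{(p-2)/2}t$, unique solvability via strict monotonicity and the limits $F(t) \to 0$, $F(t) \to \infty$, and sign recovery from positivity of $\gamma$ and of the power factor. Your only addition is welcome rather than divergent: you verify the strict monotonicity the paper merely asserts, via $F'(t) = (s^2+t^2)^{(p-4)/2}\left((p-1)t^2 + s^2\right) > 0$, which in particular settles the singular regime $1 < p < 2$ and the degenerate case $s = 0$ explicitly.
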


The following remark is not used in this paper, but in general locating the points where $\nabla u = 0$ is interesting for $p$-harmonic functions $u$.
\begin{remark}
We know the boundary points $x_0$ where $\nabla u(x_0) = 0$, since $\doo_\nu u(x_0) = 0$ if and only if $\Lambda_\gamma^s (v)(x_0) = 0$, and the Dirichlet data determines $\nabla_T u$ at boundary points.
\end{remark}

We still need to recover the strong DN map from the weak one.
\begin{lemma}\label{lemma:weak_DN_to_strong}
Suppose that $\Omega$ has $C^{2,\beta}$ boundary, boundary values~$v \in C^{2,\beta}(\doo \Omega)$ and that $\nabla \gamma$ is H{\"o}lder-continous.
Then we can recover the pointwise values of the strong DN map
\begin{equation}
\gamma(x_0) \left|\nabla u(x_0)\right|^{p-2}\doo_\nu u (x_0)
\end{equation}
from the weak DN map (see definition~\ref{definition:DN_map})
\begin{equation}
\left\langle \Lambda_\gamma^w(v), g \right\rangle = \int_\Omega \gamma |\nabla u|^{p-2} \nabla u \cdot \nabla \tilde g \dx{x}.
\end{equation}
\end{lemma}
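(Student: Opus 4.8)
The plan is to show that the weak DN map, tested against a boundary function $g$, equals the integral of the strong DN map against $g$, and then to recover the pointwise values of the (continuous) strong DN map by testing against an approximate identity concentrating at the boundary point.

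First I would record the available regularity. Under the hypotheses lemma~\ref{lemma:regularity} gives $u \in C^1(\ol\Omega)$, so $\nabla u$ extends continuously to $\doo\Omega$; together with continuity of $\gamma$ and of the unit normal $\nu$ (the boundary is $C^{2,\beta}$) this makes
\[
\Lambda_\gamma^s(v) = \gamma |\nabla u|^{p-2} \doo_\nu u
\]
a genuine continuous function on $\doo\Omega$. I would also note that the pairing $\left\langle \Lambda_\gamma^w(v), g \right\rangle$ is independent of the chosen extension $\tilde g$: two extensions of $g$ differ by an element of $W^{1,p}_0(\Omega)$, against which the integral in~\eqref{eq:weak_DN} vanishes since $u$ is a weak solution. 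This lets me take $\tilde g$, and hence $g$, as smooth as I please.

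The central step is the boundary identity $\left\langle \Lambda_\gamma^w(v), g \right\rangle = \int_{\doo\Omega} \Lambda_\gamma^s(v)\, g \dx{S}$ for smooth $g$, which I would obtain by the same $\eps$-regularisation used in the proof of theorem~\ref{thm:rellich}. For the perturbed solution $u_\eps$ the extra hypotheses ($C^{2,\beta}$ boundary, $v\in C^{2,\beta}$, $\nabla\gamma$ H\"older) put $u_\eps \in C^2(\ol\Omega)$ by lemma~\ref{lemma:eps_regularity}, so the flux $\gamma\gradeps^\frac{p-2}{2}\nabla u_\eps$ is $C^1$ and the classical divergence theorem applies; since $u_\eps$ solves the perturbed equation this flux is divergence free, giving
\[
\int_\Omega \gamma \gradeps^\frac{p-2}{2} \nabla u_\eps \cdot \nabla \tilde g \dx{x} = \int_{\doo\Omega} \gamma \gradeps^\frac{p-2}{2} \doo_\nu u_\eps \, g \dx{S}.
\]
Letting $\eps \to 0$, lemma~\ref{lemma:perturbed_convergence} gives $\nabla u_\eps \to \nabla u$ uniformly, so the fluxes converge uniformly to $\gamma|\nabla u|^{p-2}\nabla u$ (the regularised nonlinearity converges uniformly on bounded sets, a short computation), and both sides pass to the limit. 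The left side is exactly $\left\langle \Lambda_\gamma^w(v), g \right\rangle$ by definition~\ref{definition:DN_map}, and the right side is $\int_{\doo\Omega} \Lambda_\gamma^s(v)\, g \dx{S}$.

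Finally I would recover the pointwise value. Fix $x_0 \in \doo\Omega$ and take smooth nonnegative $g_\delta$ on $\doo\Omega$, supported in a $\delta$-neighbourhood of $x_0$ and normalised so that $\int_{\doo\Omega} g_\delta \dx{S} = 1$. These are admissible test functions, so by the boundary identity $\left\langle \Lambda_\gamma^w(v), g_\delta \right\rangle = \int_{\doo\Omega} \Lambda_\gamma^s(v)\, g_\delta \dx{S}$, and since $\Lambda_\gamma^s(v)$ is continuous this converges to $\Lambda_\gamma^s(v)(x_0)$ as $\delta\to0$. Thus the pointwise strong DN value is read off from the known weak map as $\lim_{\delta\to0}\left\langle \Lambda_\gamma^w(v), g_\delta \right\rangle$. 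I expect the only real friction to be the justification of the divergence theorem at $\eps = 0$, where the limiting flux is merely H\"older continuous; the $\eps$-regularisation sidesteps this exactly as in theorem~\ref{thm:rellich}, reducing everything to the uniform convergence supplied by lemma~\ref{lemma:perturbed_convergence}.
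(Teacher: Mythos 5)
Your proof is correct, but it takes a genuinely different route from the paper's. The paper fixes the explicit Lipschitz spike test functions $\eta_\delta$ and takes the localisation limit $\delta \to 0$ \emph{first}, at fixed $\eps$, obtaining the perturbed strong DN value at $x_0$ (equation~\eqref{eq:a}), and only then sends $\eps \to 0$; this order of limits forces it to show the $\delta$-rate is uniform in $\eps$, via equicontinuity of the flux family $x \mapsto \gamma(x)\left(|\nabla u_\eps(x)|^2+\eps\right)^{\frac{p-2}{2}}\doo_\nu u_\eps(x)$, which it establishes only where $|\nabla u_\eps|$ is bounded away from zero --- hence the separate case $\nabla u(x_0)=0$, handled by a direct H\"older estimate $\left|\langle \Lambda_\gamma^w(v),\eta_\delta\rangle\right| \lesssim \delta^{\beta_1}$. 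You reverse the order: with the test function $g$ held fixed you send $\eps \to 0$ first, using that $\Phi_\eps(w) = \left(|w|^2+\eps\right)^{\frac{p-2}{2}}w$ converges to $|w|^{p-2}w$ uniformly on bounded sets --- your ``short computation'' is genuine and valid for all $1<p<\infty$: for $p<2$ split into $|w| \leq m$, where both terms are $O(m^{p-1})$, and $m \leq |w| \leq R$, where the convergence is uniform; for $p \geq 2$ it is immediate --- which together with lemma~\ref{lemma:perturbed_convergence} and uniform continuity of $w \mapsto |w|^{p-2}w$ on bounded sets gives uniform convergence of the fluxes and hence the clean representation $\langle \Lambda_\gamma^w(v), g\rangle = \int_{\doo\Omega} \Lambda_\gamma^s(v)\, g \dx{S}$ for smooth $g$. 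The pointwise value then follows by an approximate identity and continuity of $\Lambda_\gamma^s(v)$, with no case split and no uniformity-in-$\eps$ argument anywhere. What each approach buys: yours is cleaner and yields the stronger intermediate statement that the weak DN map is represented by integration against the continuous strong DN density, making the vanishing-gradient points invisible; the paper's makes the delicate interchange of limits explicit and isolates exactly where degeneracy of the nonlinearity threatens ($\nabla u(x_0)=0$ for $p<2$), but at the cost of the two-case analysis that your ordering of limits renders unnecessary.
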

\begin{proof}
For $x_0 \in \doo \Omega$ define test functions~$\eta_\delta \colon \R^d \to \R$ by
\begin{equation}
\eta_\delta (x) = c_d \delta^{-d} (\delta - |x-x_0|)\chi_{B(x_0,\delta)}
\end{equation}
where $c_d > 0$ is selected so that $\int_{L} \eta_\delta \dx{x} = 1$,
where $L$ is the tangent space of $\doo \Omega$ at $x_0$.
Then $\eta_\delta$ lies in all first order Sobolev spaces with $|\nabla \eta_\delta(x)| = c_d \delta^{-d}$ in $B(x_0,\delta)$, and $0$ elsewhere.

Suppose $\nabla u(x_0)=0$.
By H{\"o}lder continuity of $|\nabla u|^{p-1}$ we get
\begin{align*}
\left|\langle \Lambda_\gamma^w (v),\eta_\delta\rangle\right| &\lesssim \| |\nabla u|^{p-2} \nabla u \cdot \nabla \eta_\delta \|_{L^1(\Omega \cap B(x_0,\delta))} \\
&\lesssim \| |\nabla u|^{p-1} |\nabla \eta_\delta| \|_{L^1(B(x_0,\delta))} \lesssim \|\delta^{\beta_1} \delta^{-d} \|_{L^1(\Omega \cap B(x_0,\delta))} \\
&\lesssim \delta^{\beta_1 - d} \delta^{d} = \delta^{\beta_1} \to 0 = \Lambda^s_\gamma(x_0)
\end{align*}
as $\delta \to 0$.

Suppose, then, that $\nabla u (x_0) \neq 0$.
Since $\nabla u_\eps \to \nabla u$ as $\eps \to 0$, there is $a > 0$ such that
\begin{equation}
|\nabla u_\eps (x_0)| \geq |\nabla u (x_0)|/2
\end{equation}
holds whenever $a > \eps \geq 0$.

We next prove that the equality
\begin{equation}\label{eq:a}
\gamma(x_0) \left(\left|\nabla u_\eps (x_0)\right|^2 + \eps \right)^\frac{p-2}{2}\doo_\nu u_\eps (x_0) = \lim_{\delta \to 0} \int_\Omega \gamma \gradeps^\frac{p-2}{2} \nabla u_\eps \cdot \nabla \eta_\delta \dx{x}
\end{equation}
holds.
By taking the limit $\eps \to 0$ we get the original claim if the rate of convergence in~\eqref{eq:a} is uniform in $\eps$.
By integrating by parts we get
\begin{align}
\smashoperator{\int_\Omega} \gamma &\gradeps^\frac{p-2}{2} \nabla u_\eps \cdot \nabla \eta_\delta \dx{x} \notag\\
=&-\smashoperator{\int_\Omega} \dive \left( \gamma \gradeps^\frac{p-2}{2}\nabla u_\eps \right) \eta_\delta \dx{x} \notag \\
&+ \smashoperator{\int_{\doo \Omega}} \eta_\delta \gamma \left(\left|\nabla u_\eps \right|^2 + \eps \right)^\frac{p-2}{2}\doo_\nu u_\eps \dx{S} \notag \\
= &\smashoperator{\int_{\doo \Omega \cap B(x_0,\delta)}} \eta_\delta \gamma \left(\left|\nabla u_\eps \right|^2 + \eps \right)^\frac{p-2}{2}\doo_\nu u_\eps \dx{S}, \label{int1}
\end{align}
since $u_\eps$ solves the perturbed equation,
and the integral~\eqref{int1} equals
\begin{align*}
\smashoperator{\int_{\doo \Omega \cap B(x_0,\delta)}} &\eta_\delta \gamma(x_0) \left(\left|\nabla u_\eps(x_0) \right|^2 + \eps \right)^\frac{p-2}{2}\nu(x_0) \cdot \nabla u_\eps(x_0) \dx{S} \\
+& \smashoperator{\int_{\doo \Omega \cap B(x_0,\delta)}} \eta_\delta \bigg( \gamma \left(\left|\nabla u_\eps \right|^2 + \eps \right)^\frac{p-2}{2}\doo_\nu u_\eps \\
&-\gamma(x_0) \left(\left|\nabla u_\eps(x_0) \right|^2 + \eps \right)^\frac{p-2}{2} \doo_\nu u_\eps(x_0) \bigg)\dx{S}\\
=&\left(\gamma(x_0) \left(\left|\nabla u_\eps (x_0)\right|^2 + \eps \right)^\frac{p-2}{2}\doo_\nu u_\eps (x_0) + o_{\delta \to 0}(1)\right)\smashoperator{\int_{\doo \Omega}} \eta_\delta \dx{S},
\end{align*}
since $\gamma$ and $\nabla u_\eps$ are continuous.
By normalisation of the test function this proves the equality~\eqref{eq:a}.

The rate of convergence is uniform in $\eps$ when $\delta$ is so small that $|\nabla u_\eps (x)|$ is bounded away from zero for $x \in B(x_0,\delta)$, since then the following $\eps$-indexed family of functions~\eqref{eq:equi} are equicontinuous with parameter $\eps$:
\begin{equation}\label{eq:equi}
x \mapsto \gamma(x) \left(|\nabla u_\eps(x)|^2 +\eps \right)^{\frac{p-2}{2}}\doo_\nu u_\eps (x).
\end{equation}
The family is H{\"o}lder-continuous on $\doo \Omega$ when $x - x_0$ is so small that $|\nabla u_\eps(x)| \geq m > 0$ for some $m$ and all $\eps < a$:
By lemma~\ref{lemma:eps_regularity} the functions $x \mapsto \nabla u_\eps (x)$ are equicontinuous with parameter $\eps$, and
the functions $w \mapsto \gamma \left( |w|^2 + \eps \right)^\frac{p-2}{2} w \cdot \nu$ are equicontinuous when $|w|$ is bounded from above and away from zero.
\end{proof}

\begin{remark}\label{remark:rellich}
We have shown that
\[ \int_\Omega (\doo_\alpha \gamma) |\nabla u|^p \dx{x} = \int_{\doo \Omega} \gamma (\alpha \cdot \nu) |\nabla u|^p \dx{S} - p\int_{\doo \Omega} (\doo_\alpha u) \Lambda_\gamma^s (u) \dx{S}. \]
Since we know $\gamma|_{\doo \Omega}$ by~\cite[theorem 1.1]{Salo:Zhong:2012}, $\nabla u|_{\doo \Omega}$ by lemma~\ref{lemma:doo_nu} and the strong DN map by lemma~\ref{lemma:weak_DN_to_strong}, we also know $\int_\Omega (\doo_\alpha \gamma) |\nabla u|^p \dx{x}.$
\end{remark}

\begin{remark}
In using the Rellich identity we need to know the values of the conductivity $\gamma$ on the entire boundary $\doo \Omega$.
The gradient $\nabla u_M$ should be large near $x_0$ and small elsewhere (we have not made this precise), so our proof is essentially local in nature.
\end{remark}

The main theorem~\ref{thm:main} now follows, since the value of the integral
\begin{equation}
\int_\Omega \doo_\alpha \gamma |\nabla u_M|^p \dx{x}
\end{equation}
is known (see remark~\ref{remark:rellich}) and it converges to $\doo_\alpha \gamma (x_0)$ by lemma~\ref{lemma:sz_main}.

\section*{Acknowledgements}
The research was partly supported by Academy of Finland.
Part of the research was done during a visit to the Institut Mittag-Leffler (Djursholm, Sweden).
The author is grateful to Mikko Salo and Joonas Ilmavirta for several discussions.

\bibliographystyle{plain}
\bibliography{math}

\end{document}